\newenvironment{myabstract}{\par\noindent
{\bf Abstract . } \small }
{\par\vskip8pt minus3pt\rm}
\newcounter{item}[section]
\newcounter{kirshr}
\newcounter{kirsha}
\newcounter{kirshb}
\newenvironment{enumarab}{\setcounter{kirshb}{1}
\begin{list}{(\arabic{kirshb})}{\usecounter{kirshb}} }{\end{list}}
\newtheorem{theorem}{Theorem}[section]
\newtheorem{lemma}[theorem]{Lemma}
\newtheorem{corollary}[theorem]{Corollary}
\newenvironment{demo}[1]{\noindent{\bf #1.}\upshape\mdseries}
{\nopagebreak{\hfill\rule{2mm}{2mm}\nopagebreak}\par\normalfont}
\theoremstyle{definition}
\newtheorem{definition}[theorem]{Definition}
\def\C{{\mathfrak{C}}}
\def\Nr{{\mathfrak{Nr}}}
\def\Fr{{\mathfrak{Fr}}}
\def\Sg{{\mathfrak{Sg}}}
\def\A{{\mathfrak{A}}}
\def\B{{\mathfrak{B}}}
\def\C{{\mathfrak{C}}}
\def\CA{{\bf CA}}
\def\(R)RA{{\bf (R)RA}}
 \def\CA{{\sf CA}}
\def\B{{\sf B}}
\def\G{{\sf G}}
 \def\Cm{{\mathfrak{Cm}}}
\def\Nr{{\mathfrak{Nr}}}
\def\Nr{{\mathfrak{Nr}}}
\def\A{{\mathfrak{A}}}
\def\B{{\mathfrak{B}}}
\def\C{{\mathfrak{C}}}
\def\A{{\mathfrak{A}}}
\def\B{{\mathfrak{B}}}
\def\C{{\mathfrak{C}}}
\def\Cm{{\mathfrak{Cm}}}
\def\At{{\mathfrak{At}}}
\def\CA{{\bf CA}}
\def\G{{\bf G}}
\def\Lm{{\sf Lm}}
\def\CPEA{{\sf CPEA}}
\def\CPES{{\sf CPES}}
\def\CPE{{\sf CPE}}
\def\Gwp{{\sf Gwp}}
\def\Gp{{\sf Gp}}
\def\PEA{{\sf PEA}}
\def\A{{\cal{A}}}
\def\B{{\mathfrak{B}}}
\def\C{{\mathfrak{C}}}
\def\G{{\mathfrak{G}}}
\def\H{{\mathfrak{H}}}
\def\Nr{{\mathfrak{Nr}}}
\def\F{{\mathfrak{F}}}
\def\CA{{\bf CA}}
\def\Cm{{\mathfrak Cm}}
\def\Sg{{\mathfrak Sg}}
\def\A{{\mathfrak A}}
\def\F{{\mathfrak F}}
\def\Str{{\mathfrak Str}}
\def\PA{{\sf PA}}
\title{Cylindric-Polyadic algebras have the super amalgamation property}
\author{Tarek Sayed Ahmed}
\begin{document}
\maketitle

\begin{abstract} We show that the  cylindric- polyadic algebras, introduced by Ferenczi, have the superamalgamation property
\end{abstract} 

\section{Introduction}

So called relativization started as a technique for generalizing representations of cylindric algebras, while also, in some cases, 
`defusing' undesirable properties, like undecidability or lack of definability (like Beth definability).
These ideas have counterparts in logic, and they have been influential in several ways. 
Relativization in cylindric-like algebras lends itself to a modal perspective where transitions are viewed as 
objects in their own right, in addition to states, while algebraic terms 
now correspond to modal formulas defining the essential properties 
of transitions. The modal perspective also suggests that first order predicate logic reflects only part of the expressive resources of abstract state 
models.

Indeed, why insist on standard models? This is a voluntary commitment to only one mathematical implementation, 
whose undesirable complexities can pollute the laws of logics needed to describe the core phenomena.
Set theoretic cartesian squares modelling as the intended  vehicle may not be an orthogonal concern, 
it can be detremental, repeating hereditory sins of old paradigms. 

Indeed in \cite{HMT2} square units got all the attention and relativization  was treated as a side issue.
Extending original classes of models for logics to manipulate their properties is common. 
This is no mere tactical opportunism, general models just do the right thing.

The famous move from standard models to generalized models is 
Henkin's turning round  second  order logic into an axiomatizable two sorted first
order logic. Such moves are most attractive 
when they get an independent motivation. 

The idea is that we want to find a semantics that gives just the bare bones of action, 
while additional effects of square set theoretic modelling are separated out as negotiable decisions of formulation 
that threatens completeness, decidability,  and interpolation.

And indeed by using relativized representations Ferenzci, proved that if we weaken commutativity of cylindrifiers
and allow  relativized representations, then we get a finitely axiomatizable variety of representable 
quasi-polyadic equality algebras (analogous to the Andr\'eka-Resek Thompson $\CA$ version); 
even more this can be done without the merry go round identities.
This is in sharp view with  complexity results proved recently by the author for quasi poyadic equality algebras, of non finite axiomatizability
over their diagonal free reduct and transposition free reducts.
Ferenzci's results can be seen as establishing a hitherto fruitful contact between neat embedding theorems and relativized representations, with enriching 
repercussions for both notions.

One can find well motivated appropriate notions of semantics by first locating them while giving up classical semantical prejudices. 
It is hard to give a precise mathematical underpinning to such intuitions. What really counts at the end of the day
is a completeness theorem stating a natural fit between chosen intuitive concrete-enough, but perhaps not {\it excessively} concrete, 
semantics and well behaved axiomatizations. 
The move of altering semantics has radical phiosophical repercussions, taking us away from the conventional 
Tarskian semantics captured by Fregean-Godel-like axiomatization; the latter completeness proof is effective 
but highly undecidable; and this property is inherited by finite variable fragments 
of first order logic as long as we insist on Tarskian semantics.

Now we use two techniques to get positive results, concerning the superamalgamation property for what Ferenzci calls 
cylindric-polyadic algebras. The term is justified by the fact that their signature
ahs only finite cylindrifiers (like cylindric algebras) but they also have {\it all} substitutions like polyadic algebras.
The first is yet again a Henkin construction (carefully implemented because we have changed the semantics, 
so that desired Henkin ultrafilters, on which our relativized models, will be based are more involved), the other is inspired by the well-developed 
duality theory in modal logic between Kripke frames and complex algebras.
This last technique was first implemented by N\'emeti in the context of relativized cylindric set algebras 
which are complex algebras of 
weak atom structures.


We need some preliminaries. The part to follow, which has to do only with representation, 
is due to Ferenczi, which we include with his permission. 
We thank him for sending us the tex file of his manuscript, which made the writing 
much easier. Using one modified version of a Henkin construction, which Ferenczi calls perfect, 
proves a very elegant completeness
theorem for infinitary extensions of first order logic with equality. 
In fact, such a result can be regarded as one possible solution to the fiitizability problem for first order logic with
equality! 

To get the stronger result of interpolation (indeed the former result can be easily destilled from the proof of the latter), 
we need {\it two} perfect ultrafilters 
that agree on the common subalgebra, or the common language of the two formulas to be interpolated.

The Daigneault--Monk--Keisler, neat embedding theorem, says that if $\A\in \PA_{\alpha}$
$\A\in S\Nr_{\alpha }\B$ 
for some $\B\in \PA_{\alpha +\varepsilon }$\textit{,
where }$\alpha $\textit{\ is a fixed infinite ordinal and }$\varepsilon >1,$, then $\A$ is representable.
(see \cite{DM} and \cite{HMT2} Thm. 5.4.17). 
The neat embedding part does work when we add diagonal elements, every algebra neatly embeds into arbitrary higher dimensions;
however, this {\it does not} enforce representability like the diagonal free case. 
Therefore, it is quite an achievement to obtain a representation theorem for polyadic-like algebras {\it with diagonal elements}, 
even if the representation is relativized. 

On the face of it, it seems that the process of relativization in this context, 
is not a free choice, it is necessary, to get over the hurdle of incompatibility of the presence of 
infinitary substitutions and diagonal elements together. This phenomena manifests itself prominently, like for example in the case of Sain's 
algebras prohibiting a solution to the finitizability problem for (algebraisable extensions of) first order logic {\it with} equality.

This unfavourable contact  is explained with care in \cite{Sagi}, showing that such a precarious  
combination blows up ultraproducts, taking us out of the 
representant class, if we stick to Tarskian semantics.  

To cut a long story short, Tarskian square semantics breaks down here, yet again, confirming our earlier 
stipulation, that square semantics was not the best choice in the world,  
and we also need to weaken the Rosser condition of commutativity of cylindrifiers,
so we are actually varying both syntax and semantics, in the ultimate aim of obtaining a perfect match 
represented in a strong completeness theorem

This is indeed, analogous to celebrated the breath-taking 
Andr\'eka-Resek-Thompson result, which opened many windows 
establishing a fruitful dichotomy in algebraic logic,
between set algebras that  are so resilieint to nice axiomtizations, 
to Stone like neat representation theorems for algebraisations of a whole landscape of multi dimensional modal logics of quantifiers, with the
ill-behaved  multi dimensional cylindric modal logic, just appearing as the top of an ice-berg, with a lot of hidden treasures below the surface.

\bigskip

This form of the neat embedding theorem for $\PA$ is due to Daigneault and
Monk. Keisler published the proof theoretical
variant of the theorem in the same issue.  Here we are going to
refer to the proof  of Theorem 4.3 in \cite{DM} and its variant for
polyadic \textit{equality} algebras (\cite{HMT2} II. 5.4.17.)

Several abstract classes $\CPEA_{\alpha}, \CPES_{\alpha}$, $_m\CPEA_{\alpha}$ ($m<\alpha)$, ${\sf Lm}_{\alpha}$ are
defined in \cite{Fer} p. 156, which are polyadic equality algebras, without full fledged commutativity of 
cylindrifiers. Te class ${\sf Lm}_{\alpha}$ are those algebras in $_m\CPEA_{\alpha}$ such that $|\Delta b|\leq \alpha$ for all $b$.  
\begin{definition}
\begin{enumarab}
\item Assume $\alpha$ is an infinite ordinal and $m<\alpha$ is infinite. Given a set $U$ and a fixes sequence $p\in {}^{\alpha}U$, the set
$$_m^{\alpha}U^{(p)}=\{x\in {}^{\alpha}U: \text { $x$ and $p$ are different at most in $m$ many spaces }\}$$
\item A transformaton $\tau$ on $\alpha$ is said to be an $m$ transformatin if $\tau i=i$ excepty for $m$-many $i\alpha$. 
The set of al such transformations
is denoted by $_m{\sf T}_{\alpha}$
\end{enumarab}
\end{definition}

Concrete classes of relativized set algebras $_m\Gwp_{\alpha}$ and $\Gp_{\alpha}$ are aso defined on the next page.
The units of such algebras are unions of squares or weak spaces, but disjointness of the bases  is omitted, and the top elements satsify certain closure conditions depending on the 
substitutions used.
So here we are encountered with a different geometry too. Such new axioms or principles 
are geometric conditions on a par with standard geometric axioms about,
points, lines, sqaures and rectangles in  multi dimensional geometric spaces.

With so many defined classes, and so many representation theorems proved by Ferenczi, 
we make the choice of sticking to one of them. The other cases can be approached in exactly the same manner
undergoing the obvious modifications, with the aid of Ferenczi's work.

The axiom refered to as (CP9)$^*$ is also defined in \cite{Fer}, which roughly states 
that cylindrifiers do not commute even with substitutions, when it is consistent that they can as in the case of 
square representations of $\PEA$.

\section{First proof}
Ferenzci shows that the Diagneault Monk  theorem  holds 
if the class $\PA_{\alpha }$ is replaced by $%
_{m}$\CPE$_{\alpha }$ and $\PEA_{\alpha +\varepsilon }$ is replaced by such a 
\textit{class} $_{m}$\CPE$_{\alpha +\varepsilon }^{-}$ such that the $_{m}$\CPE$%
_{\alpha +\varepsilon }$ axioms hold, except for the axiom (CP9)$%
^{\ast }$ which merely holds for every $i,j\in \alpha ,\sigma \in $ $_{m}$T$%
_{\alpha }$ and, in additional, the following two instances of (CP9)$^{\ast
} $ are satisfied:

\begin{equation}
c_{i}x=c_{m}s_{[i\text{ }/\text{ }m]}x\text{ if }i\in \alpha ,m\notin \alpha
,x\in A  \label{felcserélhetoA}
\end{equation}

\begin{equation}
c_{m}s_{\tau }z=s_{\tau }c_{m}z\text{ if }\tau m=m,\text{ }m\notin \alpha ,%
\text{ }\tau \in \text{ }_{m}\text{T}_{\beta },\text{ }z\in B
\label{felcserélhetoB}
\end{equation}
\newline
By $r$-representability, we mean {\it relativized} representability.
\bigskip

The following theorem implies that a more sophisticated kind of neat embeddability of an
algebra in $_{m}$\CPE$_{\alpha }\cap {\sf Lm}_{\alpha}$ is equivalent to $r$%
-representability. 

\bigskip

\begin{theorem}(Ferenzci) Assume that $\mathcal{A}\in $ $_{m}$\CPE$_{\alpha
}\cap  \Lm_{\alpha },$ \textit{where }$m$\textit{\ is infinite, }$m<\alpha $%
\textit{.} Then $\mathcal{A}\in $ S$\Nr_{\alpha }\mathcal{B}$\textit{\ for
some} $\mathcal{B}$ $\in $ $_{m}$\CPE$_{\alpha +\varepsilon }^{-},$ \textit{%
where }$\varepsilon $\textit{\ is infinite, if and only if\ }$\mathcal{A}\in 
{\bf I}_{m}$\Gwp$_{\alpha }.$

\bigskip\end{theorem}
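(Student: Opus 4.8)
The statement is a biconditional, and the plan is to handle the two implications by quite different means. The implication from $r$-representability to neat embeddability is the routine one, proved by fattening the dimension. Starting from $\A\in{\bf I}\,{}_{m}\Gwp_{\alpha}$, I may assume $\A$ is literally a generalized weak set algebra with base $U$, fixed sequence $p\in{}^{\alpha}U$, and unit a union of weak spaces $_{m}^{\alpha}U^{(p)}$. I would choose an extension $\bar p\in{}^{\alpha+\varepsilon}U$ of $p$ and build $\B$ as the full $m$-relativized weak set algebra of dimension $\alpha+\varepsilon$ whose unit is the corresponding union of spaces $_{m}^{\alpha+\varepsilon}U^{(\bar p)}$, equipped with all the $m$-substitutions $s_{\sigma}$ ($\sigma\in{}_{m}{\sf T}_{\alpha+\varepsilon}$), the cylindrifications $c_{i}$ for $i<\alpha+\varepsilon$, and the diagonals. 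Since the two special instances of (CP9)$^{*}$ displayed above, namely $c_{i}x=c_{m}s_{[i/m]}x$ and $c_{m}s_{\tau}z=s_{\tau}c_{m}z$, are concrete set-theoretic identities about cylindrification along a spare coordinate $m\notin\alpha$, they hold automatically in $\B$, so $\B\in{}_{m}\CPE^{-}_{\alpha+\varepsilon}$. The natural map identifying an $\alpha$-dimensional element with the set of its $\bar p$-extensions then exhibits $\A$ as a subalgebra of the neat reduct $\Nr_{\alpha}\B$, giving $\A\in S\Nr_{\alpha}\B$.

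For the converse, the implication from neat embeddability to $r$-representability, I would build on the neat embedding theorem recorded just above (Ferenczi's analogue of Daigneault--Monk): given $\A\in S\Nr_{\alpha}\B$ with $\B\in{}_{m}\CPE^{-}_{\alpha+\varepsilon}$ and $\varepsilon$ infinite, one already obtains a relativized representation. The engine is a Henkin construction producing \emph{perfect} ultrafilters on $\B$. Identifying $\A$ with a subalgebra of $\Nr_{\alpha}\B$, for each nonzero $a\in\A$ I would construct an ultrafilter $F$ on $\B$ containing $a$ and satisfying the witness condition: whenever $c_{i}x\in F$ there is a spare index $k\in[\alpha,\alpha+\varepsilon)$ with $s^{k}_{i}x\in F$. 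The infinitude of $\varepsilon$ guarantees that fresh witnessing coordinates are always available as the requirements are enumerated and discharged. One then sets $U=(\alpha+\varepsilon)/{\sim}$ with $i\sim j$ iff $d_{ij}\in F$, lets $p$ be the sequence of identity classes, and defines the representation by $h(b)=\{\bar\sigma:\ s_{\sigma}b\in F\}$ with $\sigma$ ranging over $m$-transformations; the union over all such $F$ yields a faithful map into a member of $_{m}\Gwp_{\alpha}$, so $\A\in{\bf I}\,{}_{m}\Gwp_{\alpha}$.

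The hard part will be verifying that the cylindrifier clause $h(c_{i}x)=c_{i}h(x)$ holds \emph{in the weak-space sense} even though full commutativity of cylindrifiers fails. This is exactly where the two weakened axioms do the work: $c_{i}x=c_{m}s_{[i/m]}x$ lets every cylindrification be simulated by a cylindrification $c_{m}$ along a spare coordinate fed by a substitution, and $c_{m}s_{\tau}z=s_{\tau}c_{m}z$ lets that spare cylindrification commute past the substitutions appearing in $h$, keeping the Henkin witnesses mutually consistent. The other delicate point is confirming that the image genuinely lands in a union of weak spaces $_{m}^{\alpha}U^{(p)}$ rather than a full square; here the hypothesis $\A\in\Lm_{\alpha}$, bounding the dimension supports, is what forces each represented element to disagree with $p$ in only $m$-many coordinates, so that relativization to weak spaces is both necessary and sufficient. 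I expect the bookkeeping that reconciles non-commutativity with the witnessing step, and the check that relativization is the precise remedy for it, to be the crux of the argument.
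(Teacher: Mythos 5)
Your overall strategy matches the paper's: the paper also treats only the hard direction (attributing the full theorem to Ferenczi), and its engine is exactly the Henkin-style construction of witness-providing ("perfect") ultrafilters, followed by a representation of the form $h(b)=\{\bar\sigma: s_{\sigma}b\in F\}$ and a union over nonzero $a$; your easy direction, which the paper does not prove at all, is plausible in outline. However, there are two genuine gaps in your hard direction. First, your witness condition is too weak. You require: whenever $c_{i}x\in F$ there is a spare $k$ with $s^{k}_{i}x\in F$. What the cylindrifier clause of $h$ actually needs is the \emph{substituted} form, which is how the paper defines perfect ultrafilters: whenever $s_{\tau}c_{j}x\in F$, with $\tau$ an admissible $m$-transformation and $x\in A$, there exists a fresh $m'\notin\alpha$ with $\tau m'=m'$ and $s_{\tau}s_{[j/m']}x\in F$. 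The reason is that a point of $h(c_{j}b)$ is a sequence $\bar\sigma$ with $s_{\sigma}c_{j}b\in F$, and to produce a point of $h(b)$ agreeing with $\bar\sigma$ off $j$ you need a witness for the element $s_{\sigma}c_{j}b$ as a whole; you cannot reduce this to your unsubstituted condition, because rewriting $s_{\sigma}c_{j}b$ as a cylindrification of something requires commuting $c_{j}$ past $s_{\sigma}$, which is precisely what fails in $_{m}\CPE^{-}$. (The axiom $c_{m}s_{\tau}z=s_{\tau}c_{m}z$ applies only when $\tau m=m$, i.e., only after the witnesses have already been arranged in substituted form.) So the requirements you enumerate and discharge must be the elements $s_{\tau}c_{j}x$, not the elements $c_{j}x$.

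Second, and more seriously, you treat the Henkin step as routine ("fresh witnessing coordinates are always available"), but the crux of the paper's contribution to this theorem --- its only displayed proof, the Lemma --- is the verification that adjoining a witness implication $s_{\tau}c_{j}x\rightarrow s_{\tau}s_{[j/m_{n}]}x$ keeps the filter \emph{proper}; this is not automatic in the non-commutative setting. The paper argues: if properness failed, then $a$ together with finitely many earlier generators would lie below $-(s_{\tau}c_{j}x\rightarrow s_{\tau}s_{[j/m]}x)$; applying the dual operator $c_{m}^{\partial}=-c_{m}-$ to both sides, the left-hand side is unchanged (each earlier generator $g$ has $m\notin {\sf dim}\,g$, by the axiom $c_{m}s_{\tau}z=s_{\tau}c_{m}z$ with $\tau m=m$), while the right-hand side collapses to $0$ (using $c_{j}x=c_{m}s_{[j/m]}x$ and then the same commutation axiom), a contradiction. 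This is where the two displayed special instances of (CP9)$^{*}$ are actually deployed; your proposal locates their use only in the final homomorphism check, leaving the existence of the ultrafilter $F$ itself unjustified. Relatedly, your sketch omits the bookkeeping that makes the recursion possible --- passing WLOG to $\varepsilon>\max(\alpha,|A|)$ with $\alpha+\varepsilon$ regular, so that the set of requirements has size at most $\beta=\alpha+\varepsilon$ and at every stage there remain indices outside $\alpha\cup\bigcup_{g\in G_{n-1}}{\sf dim}\,g$ fixed by $\tau$ --- as well as the paper's final device of passing to the minimal completion of $\B$ in order to extract the perfect ultrafilter extending the constructed filter.
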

Let us consider the hard direction, namely,  that $\mathcal{A}\in $ S$\Nr_{\alpha }\mathcal{B%
}$ implies $\mathcal{A}\in {\bf I}_{m}$\Gwp$_{\alpha }$. 

\bigskip

Fix an algebra $\mathcal{B}
$. Let us denote by $adm$ the class of $m$%
-transformations $\tau $ $\in $ $^{\alpha }\beta $, i.e., $\tau \in $ $_{m}$T%
$_{\alpha }$ $\cap $ $^{\alpha }\beta ,$ where $\alpha +\varepsilon $ is
denoted by $\beta .$ We introduce a concept needed in the proof: \textit{A
Boolean ultrafilter ${F}$ $in$ $\mathcal{B}$ is a }perfect\textit{\
ultrafilter if for any element of the form $s_{\tau }c_{j}x$ included in $F$%
, where $j\in \alpha $, $x\in A$ and $\tau \in adm$, there exists an $m,$ $%
m\notin \alpha ,$ $\tau m=m$ such that $s_{\tau }s_{[j\;/\;m]}x\in F$.}

\bigskip

As is known, neat embeddability into a class with $\varepsilon $ extra
dimensions, where $\varepsilon $ is infinite, implies neat embeddability into
the class with any infinitely many extra dimensions (\cite{HMT1} I.
2.6.35). In otherwords, strectching extra dimensions to $\omega$ is enough to do any infinite strectching using arbirary more dimensions.
Therefore, from now on, we may assume that $\varepsilon >$ $\max
(\alpha ,\left| A\right| ),$ and $\varepsilon +\alpha $ is a regular cardinal.

\begin{lemma}\textit{Let }$a$\textit{\ be an arbitrary, but fixed
non-zero element of }$A$\textit{\ and} $\varepsilon >$ max ($\alpha ,\left|
A\right| $), where $\varepsilon +\alpha $ is regular, and assume that $%
\mathcal{A}\in $ S$Nr_{\alpha }\mathcal{B}$\textit{\ for some} $\mathcal{B}$ 
$\in $ $_{m}$\CPE$_{\alpha +\varepsilon }^{-}.$ \textit{Then, there exists a
proper Boolean filter }$\mathcal{D}$ \textit{in }$\mathcal{B}$,\textit{\
such that }$a\in D$\textit{\ and any arbitrary ultrafilter containing }$D$%
\textit{\ is a perfect ultrafilter in} $\mathcal{B}$.
\end{lemma}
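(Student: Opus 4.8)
The plan is to construct $\mathcal{D}$ by a Henkin-style transfinite recursion, obtaining it as the union of an increasing chain of proper Boolean filters, each gotten from its predecessor by adjoining a single witnessing element. The whole argument is driven by one identity in $\mathcal{B}$: for $j\in\alpha$, $x\in A$ and any spare index $m\in\beta\setminus\alpha$, the first displayed instance of (CP9)$^{*}$ gives $c_{j}x=c_{m}s_{[j\,/\,m]}x$, while the second, applicable because $\tau m=m$ automatically holds for $\tau\in adm$ when $m\notin\alpha$, lets $s_{\tau}$ commute past $c_{m}$; combining them,
\[
s_{\tau}c_{j}x=s_{\tau}c_{m}s_{[j\,/\,m]}x=c_{m}\bigl(s_{\tau}s_{[j\,/\,m]}x\bigr).
\]
Thus every ``existential'' element $s_{\tau}c_{j}x$ is literally the cylindrification along a spare dimension $m$ of its candidate witness $s_{\tau}s_{[j\,/\,m]}x$. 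It is exactly here that one uses $x\in A$ rather than an arbitrary element of $B$, which is why perfectness is phrased for $x\in A$.

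First I would fix an enumeration $\langle(\tau_{\xi},j_{\xi},x_{\xi}):\xi<\lambda\rangle$ of $adm\times\alpha\times A$, set $\mathcal{D}_{0}$ to be the filter generated by $a$, and take unions at limits. At stage $\xi+1$, after selecting a spare index $m_{\xi}\in\beta\setminus\alpha$ and writing $y_{\xi}=s_{\tau_{\xi}}s_{[j_{\xi}\,/\,m_{\xi}]}x_{\xi}$, I adjoin the Henkin conditional
\[
w_{\xi}=-\,s_{\tau_{\xi}}c_{j_{\xi}}x_{\xi}+y_{\xi}
\]
and let $\mathcal{D}_{\xi+1}$ be generated by $\mathcal{D}_{\xi}\cup\{w_{\xi}\}$. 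The payoff is immediate: in any ultrafilter $F\supseteq\mathcal{D}$, if $s_{\tau_{\xi}}c_{j_{\xi}}x_{\xi}\in F$ then, since $w_{\xi}\in F$ and $s_{\tau_{\xi}}c_{j_{\xi}}x_{\xi}\cdot w_{\xi}\le y_{\xi}$, we get $y_{\xi}=s_{\tau_{\xi}}s_{[j_{\xi}\,/\,m_{\xi}]}x_{\xi}\in F$ with $m_{\xi}\notin\alpha$ and $\tau_{\xi}m_{\xi}=m_{\xi}$, which is precisely the perfectness demand for that triple. Hence every ultrafilter extending $\mathcal{D}=\bigcup_{\xi<\lambda}\mathcal{D}_{\xi}$ is perfect, and $a\in\mathcal{D}$ by construction.

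The substance is to choose each $m_{\xi}$ so that properness survives. A finite meet $g$ of generators of $\mathcal{D}_{\xi}$ involves $a$ and finitely many earlier $w_{\eta}$, so its dimension set $\Delta g$ lies within $\alpha$ together with the boundedly many spare indices and witnesses occurring in those $w_{\eta}$; in particular $|\Delta g|<\beta$. I select $m_{\xi}$ outside $\Delta g$, outside every earlier witness $m_{\eta}$, and outside every earlier $\rng(\tau_{\eta})\setminus\alpha$, so that $c_{m_{\xi}}g=g$. Properness of $\mathcal{D}_{\xi+1}$ then amounts to $g\cdot w_{\xi}\ne 0$ whenever $g\ne 0$: if $g\cdot w_{\xi}=0$ then $g\le -w_{\xi}=c_{m_{\xi}}y_{\xi}\cdot(-y_{\xi})$ by the identity above, so $g\le c_{m_{\xi}}y_{\xi}$ and $g\cdot y_{\xi}=0$; applying the cylindric law $c_{m_{\xi}}(c_{m_{\xi}}g\cdot y_{\xi})=c_{m_{\xi}}g\cdot c_{m_{\xi}}y_{\xi}$ with $c_{m_{\xi}}g=g$ gives $0=c_{m_{\xi}}(g\cdot y_{\xi})=g\cdot c_{m_{\xi}}y_{\xi}$, whence $g=g\cdot c_{m_{\xi}}y_{\xi}=0$, a contradiction. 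Since any finite meet uses only finitely many witnesses, peeling them off one at a time keeps every $\mathcal{D}_{\xi}$, and hence $\mathcal{D}$, proper.

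The one genuinely delicate point, and where I expect the main obstacle, is guaranteeing that a suitable fresh $m_{\xi}$ is available at every stage. The properness computation forces the witnesses to be pairwise distinct and to dodge the active dimensions accumulated so far, so the reservoir $\beta\setminus\alpha$ must stay large enough across the whole recursion of length $\lambda=|adm\times\alpha\times A|$. This is exactly what the hypotheses $\varepsilon>\max(\alpha,|A|)$ and the regularity of $\alpha+\varepsilon=\beta$ are meant to secure: regularity keeps the set of indices used before stage $\xi$ of size $<\beta$, so a spare dimension remains untouched and the choice of $m_{\xi}$ never fails. Controlling this cardinality bookkeeping, rather than any single algebraic manipulation, is the crux; once it is in hand, $\mathcal{D}$ is the desired proper filter containing $a$ all of whose ultrafilter extensions are perfect.
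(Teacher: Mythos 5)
Your proposal is correct and takes essentially the same route as the paper's own proof: the identical Henkin-style transfinite recursion adjoining conditionals $-s_{\tau}c_{j}x+s_{\tau}s_{[j\,/\,m]}x$ with fresh spare witnesses $m$, resting on the same key identity $s_{\tau}c_{j}x=c_{m}\bigl(s_{\tau}s_{[j\,/\,m]}x\bigr)$ obtained from the two displayed instances of (CP9)$^{*}$, and the same cardinality bookkeeping from $\varepsilon>\max(\alpha,|A|)$ and regularity of $\beta$. Your properness step (deriving $g=0$ from $g\cdot w_{\xi}=0$ via $c_{m}(c_{m}g\cdot y)=c_{m}g\cdot c_{m}y$) is just the dual formulation of the paper's step of applying $c_{m}^{\partial}$ to both sides of the corresponding inequality, so the two verifications coincide in substance.
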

\begin{proof} Henkin's completeness proof is appropriately adapted to the algebras in question.
We use freely the axiomatization $(CP_0)-(E_3)$ in \cite{Fer} p.156.
Let
\begin{equation*}
X=\left\{ s_{\tau }c_{j}x:\tau \in adm,\;j\in \alpha ,x\in A\right\} .
\end{equation*}
Let $\beta $ denote the ordinal $\alpha +\varepsilon .$ $\left| X\right|
\leq $ $^{m}\beta \cdot \alpha \cdot \left| A\right| .$ $\beta >\alpha ,$ $%
\beta $ is regular, therefore, $^{m}\beta =\beta .$ $\beta >\max $ $(\alpha
,|A|)$ and $^{m}\beta =\beta $ imply that $|X|\leq \beta $. Let $\rho
\,:\,\beta \rightarrow X$ be a fixed enumeration of $X$.

Let $F_{0}$ be the Boolean (BA) filter of $\mathcal{B}$ generated by $a$. We
define $\mathit{recursively}$ an increasing sequence $\left\langle
F_{i}\,:\,i<\beta \right\rangle $ of proper BA filters in $\mathcal{B}$.

Assume that $\rho _{1}=s_{\tau ^{\prime }}c_{j^{\prime }}^{{}}x^{\prime },$
where $\tau ^{\prime }\in adm,$ $j^{\prime }\in \alpha $ and $x^{\prime }\in
A.$ Let $F_{1}$ be the filter generated by the set $G_{1}$ in $\mathcal{B}$,
where

\begin{equation*}
G_{1}=F_{0}\cup \left\{ s_{\tau ^{\prime }}c_{j^{\prime }}^{{}}x^{\prime
}\rightarrow s_{\tau ^{\prime }}^{{}}s_{[j^{\prime }\text{ / }m^{\prime
}]}x^{\prime }\right\}
\end{equation*}
\newline
and $m^{\prime }\in \beta $ is such that $m^{\prime }\notin \;$Rg $\tau
^{\prime }\cup \alpha .$ We will show that $F_{1}$ is a proper filter in $%
\mathcal{B}.$

Let $n$ be a fixed ordinal ($n<\beta $). Assume that $F_{i}$ $(0\leq i<n-1$)
has been defined by the fixed generator system $G_{i}$ ($G_{0}\subset
G_{1}\subset \ldots \subset G_{n-1}).$ Let $\rho _{n}=s_{\tau }c_{j}x$,
where $\tau \in adm$, $j\in \alpha $ and $x\in A$.

If $n$ is a \textit{successor }ordinal, let $F_{n}$ be the filter in $%
\mathcal{B}$ generated by the set 
\begin{equation}
G_{n}=G_{n-1}\cup \{s_{\tau }c_{j}x\rightarrow s_{\tau
}s_{[j\,/\,\,m_{n}]}x\}  \label{*1}
\end{equation}
\qquad \newline
where $m_{n}\in \beta $ is such an ordinal that $m_{n}\notin \alpha $ $\cup $
$\bigcup\limits_{g\in G_{n-1}}$dim $g$ and $\tau m_{n}=m_{n}.$ Such an
ordinal exists because $\left| \text{dim }g\right| \leq m$, $\left|
G_{n-1}\right| <n$, hence $\left| \bigcup\limits_{g\in G_{n-1}}\text{dim }%
g\right| <m\cdot n<\beta ,$ furthermore $\alpha <\beta ,$ too. Let us denote
by $g_{n}$ the generator element $s_{\tau }c_{j}^{{}}x\rightarrow s_{\tau
}^{{}}s_{[j\text{ / }m]}x.$

If $n$ is \textit{a limit }ordinal, then let $F_{n}$ be $\bigcup%
\limits_{i<n}F_{i}$. Obviously, $F_{j}\subseteq F_{k}$ if $j<k$.

It remains to show that the filter $F_{n}$ generated by the set in (\ref{*1}%
) is a \textit{proper} filter. The only case worthwhile considering is the
case when $n$ is a successor ordinal. Indirectly, assume that $F_{n-1}$ is
proper and assume, seeking a contradiction, that $F_{n}$ is not.

In what follows, let us denote $m_{n}$ by $m$, for short. Suppose on the
contrary that $-(s_{\tau }c_{j}x\rightarrow s_{\tau }s_{[j\text{ }/\,\text{\ 
}m]}x)$ belongs to $F_{n}$. The property of generating filters in BA's
implies that there are finitely many generators in $F_{n-1}$ such that 
\begin{equation*}
a\ \cdot (s_{\tau _{1}}c_{j_{1}}x_{1}\rightarrow s_{\tau _{1}}s_{[j_{1}\text{
}/\,\text{\ }m_{1}]}x_{1})\cdot \ldots \cdot (s_{\tau
_{k}}c_{j_{k}}x_{k}\rightarrow s_{\tau _{k}}s_{[j_{k}\text{ }/\,\text{\ }%
m_{k}]}x_{k})\leq
\end{equation*}
\begin{equation}
\leq -(s_{\tau }c_{j}x\rightarrow s_{\tau }s_{[j\text{ }/\,\text{\ }%
m]}^{{}}x)  \label{eq15}
\end{equation}
where $x_{1},x_{2},\ldots ,x_{k},x$ are in $A$. Let us apply $%
c_{m}^{\partial }$ to both sides of this inequality ($c_{m}^{\partial }$
denotes the operator $-c_{m}-).$

If $x$ is any factor of the left-hand side, then the conditions $m\notin $%
dim $g$, $g\in G_{n-1},$ $x\in F_{n-1}$ and (\ref{felcserélhetoB}) imply
that 
\begin{equation}
c_{m}(s_{\tau _{i}}c_{j_{i}}x_{i}\rightarrow s_{\tau _{i}}s_{[j_{1}\text{ }%
/\,\text{\ }m_{1}]}x)=s_{\tau _{i}}c_{j_{i}}x_{i}\rightarrow s_{\tau
_{i}}s_{[j_{i}\text{ }/\,\text{\ }m_{i}]}x.  \label{eq16}
\end{equation}
But (\ref{eq16}) is true for $c_{m}^{\partial }$ instead of $c_{m}$, using
that $c_{m}(-c_{m}x)=-c_{m}x$, $x\in B$. Thus, applying $c_{m}^{\partial }$
to the left-hand side of (\ref{eq15}) does not change it and it must be
different from $0$ because $F_{n-1}$ is a proper filter. Here we have used
that $c_{m}^{\partial }(u+v)=c_{m}^{\partial }u+c_{m}^{\partial }v$, which
is a consequence of (CP3).

Applying $c_{m}^{\partial }$ to the right-hand side of (\ref{eq15}), we can
show that we obtain zero. We have 
\begin{align}
c_{m}^{\partial }(-(s_{\tau }c_{j}x\rightarrow s_{\tau }s_{[j\text{ }/\,%
\text{\ }m]}x))& =-c_{m}[-s_{\tau }c_{j}x+s_{\tau }s_{[j\text{ }/\,\text{\ }%
m]}x)]  \notag \\
& =-[c_{m}(-s_{\tau }c_{j}x)+c_{m}s_{\tau }s_{[j\text{ }/\,\text{\ }m]}x]
\label{eq21}
\end{align}
because $c_{m}(u+v)=c_{m}u+c_{m}v$.

On the one hand, as regards $c_{m}(-s_{\tau }c_{j}x)$ in (\ref{eq21}), by $%
m\notin \alpha ,$ $\tau m=m\ $ and (\ref{felcserélhetoB}),

\begin{equation}
c_{m}(-s_{\tau }c_{j}x)=c_{m}(-c_{m}s_{\tau }c_{j}x)=-c_{m}s_{\tau }c_{j}x.
\label{eq21.5}
\end{equation}
Here $c_{m}s_{\tau }c_{j}x=s_{\tau }c_{m}s_{[j\text{ }/\,\text{\ }m]}c_{m}x$
because

\begin{equation}
c_{j}x=c_{m}s_{[j\text{ }/\,\text{\ }m]}x  \label{csere}
\end{equation}
\newline
by (\ref{felcserélhetoB}) if $x\in A$. Therefore, 
\begin{equation}
c_{m}(-s_{\tau }c_{j}x)=-s_{\tau }c_{m}s_{[j\text{ }/\,\text{\ }m]}c_{m}x\,
\label{eq22}
\end{equation}

On the other hand, as regards $c_{m}s_{\tau }s_{[j\text{ }/\,\text{\ }m]}x,$
i.e., $c_{m}s_{\tau }s_{[j\text{ }/\,\text{\ }m]}c_{m}x$ in (\ref{eq21})$,$ 
\begin{equation}
c_{m}s_{\tau }s_{[j\text{ }/\,\text{\ }m]}c_{m}x=s_{\tau }c_{m}s_{[j\text{ }%
/\,\text{\ }m]}c_{m}x.  \label{eq23}
\end{equation}
\newline
by (\ref{felcserélhetoA}), where $m\notin $ $\alpha $,$\tau m=m$.

From (\ref{eq22}) and (\ref{eq23}) we get that (\ref{eq21}) is zero. It is a
contradiction because the left-hand side of (\ref{eq21}) is different from
zero. Therefore we have shown that, in fact, $F_{n}$ is a proper filter.

Now, we have a sequence $G_{0}=F_{0}\subset F_{1}\subset F_{2}\subset \ldots
\subset F_{n}\subset \ldots $ of proper filters. Now let

\begin{equation}
\mathcal{D}=\bigcup\limits_{n}\{F_{n}\,:\,n<\beta \}.  \label{filterdef}
\end{equation}

$\mathcal{D}$ is a proper filter, too. $\mathcal{D}$ contains all the
elements of the form $s_{\tau }c_{j}x,$ $x\in A.$ It is easily seen that $%
\mathcal{D}$ is the desired filter.

\bigskip

But we are not quite finished. We need an extra technical  trick to make things tic. 

\textit{The required perfect ultrafilter }$F$ \textit{in }$\mathcal{B}$%
, \textit{extending the filter} $\mathcal{D}$ will be 
defined as follows:

\bigskip
Take the {\it minimal  completion} $\mathcal{B}$' of $\mathcal{B}
$ (see \cite{HMT2}, I., 2.7.2). exists because operations are additive (here diagonals play an essential role; for infact our variety is conjugated).
Take 
the filter $F^{\prime }$ in $\mathcal{B}$', generated by (the generators of) $%
\mathcal{D}$ -- such a filter $F^{\prime }$ obviously exists. Next consider any
fixed ultrafilter ($F^{\prime }$)$^{+}$ in $\mathcal{B}$', which extends $%
F^{\prime }.$ The restriction $F$ of ($F^{\prime }$)$^{+}$ to $%
\mathcal{B}$ is an ultrafilter in $\mathcal{B}$, which we take for the required extension of the filter $\mathcal{D}$ in 
$\mathcal{B}$. 
\end{proof}

Now using the ideas above, let us prove interpolation. Let $\A$ be the free algebra, $X_1, X_2\subseteq \A$, $a\in \Sg X_1$ and $b\in \Sg X_2$.
such that that $a\leq b$. Assume that an interpolant {\it does not} exist. 
Neatly embed the free algebra  into the {\it full} neat reduct of a dilation having enough spare dimensions, the dimension of the dilation should also
be a regular cardinal as in the above proof.
This gives enough space to maneuve, in the processs, eliminating  cylindrifiers.
This part is easy, because we have so many substitutions. 

The second part consists of the construction the two perfect ultrafilters, 
the first contains $a$, the second contains $-c$, constructed in such a way that they agree on $\Sg^{\A}(X_1\cap X_2)$, 
then  one constructs yet a third perfect ultarfilter in the last algebra, and obtains (by Ferenczi's result) 
a relativized representation $h$ of $\A$ using its freeness, on the set of generators $X_1\cup X_2$
such that $h(a.-c)\neq 0$, 
but this is a contradiction.

Let us implement the above sketch, but we consider the most basic cylindric polyadic algebras, namely 
the class  $_m\CPEA.$ The rest of the cases for non-commutive cylindric polyadic 
algebras, are entirely analagous.

Ferenczi proves that if we have a neat embedding into enough infinite spare dimensions, we have representability. 
The next lemma shows that we can always neatly embed our algebras
in enough spare dimension. The idea is that our algebras form what Diagneault and Monk call transformation 
systems \cite{DM}.
Strictly speacking this applies to their reduct obtained by discarding all operations except {\it all} substitutions.

\begin{lemma}\label{net} Let $\A$ be a cylindric polyadic algebra of dimension $\alpha$. 
Then for every $\beta>\alpha$ there exists a cylindric polyadic algebra of dimension $\beta$ 
such that
$\A\subseteq \Nr_{\alpha}\B$, and furthermore, for all $X\subseteq \A$ we have
$$\Sg^{\A}X=\Sg^{\Nr_{\alpha}\A}X=\Nr_{\alpha}\Sg^{\B}X.$$ In particular, $\A=\Nr_{\alpha}\B$.
$\Sg^{\B}\A$ is called the minimal dilation of $\A.$ 
\end{lemma}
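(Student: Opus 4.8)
The plan is to realize $\B$ as the \emph{minimal dilation}, generated by $\A$, and to build it in two stages: first dilate the pure substitution reduct of $\A$ by the Daigneault--Monk theory of transformation systems, then reinstate the finite cylindrifiers and diagonals in the extra dimensions, and finally verify the neat-reduct identity by a normal-form argument.

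First I would dilate the transformation-system reduct. Discarding the cylindrifiers $c_i$ and diagonals $d_{ij}$ but retaining all substitutions $s_\tau$ ($\tau\in{}^{\alpha}\alpha$) turns $\A$ into a Boolean algebra carrying a monoid action by Boolean endomorphisms, that is, a transformation system of degree $\alpha$ in the sense of \cite{DM}. By the Daigneault--Monk dilation theorem this has a transformation system of degree $\beta$ in which it embeds; concretely one takes the Boolean algebra generated by the formal instances $s_\tau a$ ($a\in A$, $\tau\in{}^{\beta}\beta$), subject to the composition law for substitutions and to the identifications forced by the finite supports of the elements of $A$. Since every $a\in A$ is moved only finitely by the cylindric structure yet arbitrarily by substitutions, this is well defined, and $\A$ sits inside its $\alpha$-part.

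Second I would reinstate the cylindric-polyadic operations on $\B$. For $i\in\alpha$ the operations $c_i$ and $d_{ij}$ are already present on $\A$; for a new index $m\notin\alpha$ I would define $c_m$ and $d_{mj}$ through the substitution machinery, transporting $m$ into $\alpha$ by a replacement $s_{[m/i]}$, applying the corresponding $\alpha$-dimensional operation, and transporting back --- exactly the pattern codified by the two distinguished instances of (CP9)$^{\ast}$ employed in the proof of the previous lemma. Taking $\B$ to be the algebra generated by $\A$ under all these $\beta$-dimensional operations, one checks that $\B$ satisfies the axioms $(CP_0)$--$(E_3)$ of \cite{Fer}, so $\B\in{}_{m}\CPEA_{\beta}$ and $\Sg^{\B}\A=\B$. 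The natural map sending $a$ to its copy $s_{\mathrm{Id}}a$ embeds $\A$ into $\B$, and its image lies in $\Nr_{\alpha}\B$ because each element of $\A$ has dimension set contained in $\alpha$; thus $\A\subseteq\Nr_{\alpha}\B$.

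Finally, and this is where the real work lies, I would establish $\Sg^{\A}X=\Nr_{\alpha}\Sg^{\B}X$ for every $X\subseteq A$. Since $\A$ has dimension $\alpha$ we have $\Nr_{\alpha}\A=\A$, so the first equality $\Sg^{\A}X=\Sg^{\Nr_{\alpha}\A}X$ is trivial. For the second, the inclusion $\Sg^{\A}X\subseteq\Nr_{\alpha}\Sg^{\B}X$ is immediate: the $\alpha$-operations of $\B$ restrict to those of $\A$, and every element of $\Sg^{\A}X$ has dimension set inside $\alpha$. The substantial inclusion is $\Nr_{\alpha}\Sg^{\B}X\subseteq\Sg^{\A}X$. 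I would take $b\in\Sg^{\B}X$ with $\Delta b\subseteq\alpha$, represent $b$ by a $\beta$-dimensional term over $X$, and eliminate every index $m\notin\alpha$ occurring in it: using the identity $c_{j}x=c_{m}s_{[j/m]}x$ ($j\in\alpha$, $m\notin\alpha$) together with the commutation identities, each out-of-$\alpha$ cylindrifier is rewritten back onto an index in $\alpha$, while any outer substitution reaching outside $\alpha$ is absorbed because $b$ does not depend on those coordinates. Iterating until no index outside $\alpha$ remains rewrites $b$ as an $\alpha$-dimensional term over $X$, so $b\in\Sg^{\A}X$. The main obstacle is precisely this rewriting: one must show that the elimination of out-of-$\alpha$ indices terminates and leaves no residual dependence, and it is here that the abundance of substitutions is indispensable --- the same feature that drives the perfect-filter construction. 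The in-particular claim then follows with $X=A$: since $\Sg^{\B}A=\B$ we obtain $\A=\Sg^{\A}A=\Nr_{\alpha}\Sg^{\B}A=\Nr_{\alpha}\B$.
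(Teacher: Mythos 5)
Your overall architecture coincides with the paper's: dilate the substitution reduct via Daigneault--Monk transformation systems, re-introduce the finite cylindrifiers through the substitution machinery, then prove the neat-reduct identity. But at the two places where the real mathematical content lies, your proposal has genuine gaps. First, when you reinstate ${\sf c}_m$ for $m\notin\alpha$ you never address \emph{well-definedness}. An element of the dilation has many representations ${\sf s}_{\sigma}^{\B}p={\sf s}_{\sigma_1}^{\B}p_1$ with $p,p_1\in A$, and any definition of a cylindrifier given in terms of such a representation must be shown independent of the choice. The paper's definition
$${\sf c}_k{\sf s}_{\sigma}^{\B}p={\sf s}_{\rho^{-1}}^{\B}{\sf c}_{\rho(\{k\}\cap\sigma\alpha)}{\sf s}_{(\rho\sigma\upharpoonright\alpha)}^{\A}p$$
uses a \emph{permutation} $\rho$ pulling $\sigma\alpha$ back into $\alpha$ --- note that your replacement ${\sf s}_{[m/i]}$ cannot play this role, since a replacement is not injective and ``transporting back'' after it loses information --- and a substantial portion of the paper's proof consists exactly of the soundness verification that two representations of the same element yield the same value, followed by pulling each polyadic axiom back to $\A$. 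Your sketch is silent on both.

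Second, and more seriously, the inclusion $\Nr_{\alpha}\Sg^{\B}X\subseteq\Sg^{\A}X$ --- which you yourself call the place ``where the real work lies'' --- is left unproven: you propose a term-rewriting induction eliminating out-of-$\alpha$ indices and then concede that its termination and residue-freeness are ``the main obstacle''. Naming the obstacle is not overcoming it. The idea that closes this gap (used in the paper, citing \cite{DM} Theorems 3.3 and 4.3) is a \emph{normal form}: every element of $\Sg^{\B}X$ is a \emph{single} substitution instance ${\sf s}_{\sigma}^{\B}y$ with $y\in\Sg^{\A}X$ and $\sigma\upharpoonright\alpha$ one-to-one, so no induction over terms and no index-by-index elimination is needed at all. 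Given the normal form the argument is three lines: if $x={\sf s}_{\sigma}^{\B}y$ and $\Delta x\subseteq\alpha$, choose $\tau$ with $\tau\upharpoonright\alpha=Id$ and $(\tau\circ\sigma)\alpha\subseteq\alpha$; then ${\sf s}_{\tau}^{\B}x=x$ because $x$ does not depend on coordinates outside $\alpha$, whence $x={\sf s}_{\tau}^{\B}{\sf s}_{\sigma}^{\B}y={\sf s}_{\tau\circ\sigma}^{\B}y={\sf s}_{(\tau\circ\sigma)\upharpoonright\alpha}^{\A}y\in\Sg^{\A}X$. Without this normal form (or a completed induction replacing it), your proof of the central claim of the lemma does not go through.
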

\begin{demo}{Proof} The proof depends essentially on the abundance of substitutions; we have all of them, which makes stretching dimensions 
possible. We provide a proof for cylindric polyadic algebras; the rest of the cases are like the corresponding prof in \cite{DM} for Boolean polyadic 
algebras. 

We extensively use the techniques in \cite{DM}, but we have to watch out, for we only have finite cylindrifications.
Let $(\A, \alpha,S)$ be a transformation system. 
That is to say, $\A$ is a Boolean algebra and $S:{}^\alpha\alpha\to End(\A)$ is a homomorphism. For any set $X$, let $F(^{\alpha}X,\A)$ 
be the set of all functions from $^{\alpha}X$ to $\A$ endowed with Boolean operations defined pointwise and for 
$\tau\in {}^\alpha\alpha$ and $f\in F(^{\alpha}X, \A)$, ${\sf s}_{\tau}f(x)=f(x\circ \tau)$. 
This turns $F(^{\alpha}X,\A)$ to a transformation system as well. 
The map $H:\A\to F(^{\alpha}\alpha, \A)$ defined by $H(p)(x)={\sf s}_xp$ is
easily checked to be an isomorphism. Assume that $\beta\supseteq \alpha$. Then $K:F(^{\alpha}\alpha, \A)\to F(^{\beta}\alpha, \A)$ 
defined by $K(f)x=f(x\upharpoonright \alpha)$ is an isomorphism. These facts are straighforward to establish, cf. theorem 3.1, 3.2 
in \cite{DM}. 
$F(^{\beta}\alpha, \A)$ is called a minimal dilation of $F(^{\alpha}\alpha, \A)$. Elements of the big algebra, or the cylindrifier free 
dilation, are of form ${\sf s}_{\sigma}p$,
$p\in F(^{\beta}\alpha, \A)$ where $\sigma$ is one to one on $\alpha$, cf. \cite{DM} theorem 4.3-4.4.

We say that $J\subseteq I$ supports an element $p\in A,$ if whenever $\sigma_1$ and  $\sigma_2$ are 
transformations that agree on $J,$ then  ${\sf s}_{\sigma_1}p={\sf s}_{\sigma_2}p$.
$\Nr_JA$, consisting of the elements that $J$ supports, is just the  neat $J$ reduct of $\A$; 
with the operations defined the obvious way as indicated above. 
If $\A$ is an $\B$ valued $I$ transformaton system with domain $X$, 
then the $J$ compression of $\A$ is isomorphic to a $\B$ valued $J$ transformation system
via $H: \Nr_J\A\to F(^JX, \A)$ by setting for $f\in\Nr_J\A$ and $x\in {}^JX$, $H(f)x=f(y)$ where $y\in X^I$ and $y\upharpoonright J=x$, 
cf. \cite{DM} theorem 3.10.

Now let $\alpha\subseteq \beta.$ If $|\alpha|=|\beta|$ then the the required algebra is defined as follows. 
Let $\mu$ be a bijection from $\beta$ onto $\alpha$. For $\tau\in {}^{\beta}\beta,$ let ${\sf s}_{\tau}={\sf s}_{\mu\tau\mu^{-1}}$ 
and for each $i\in \beta,$ let 
${\sf c}_i={\sf c}_{\mu(i)}$. Then this defined $\B\in GPHA_{\beta}$ in which $\A$ neatly embeds via ${\sf s}_{\mu\upharpoonright\alpha},$
cf. \cite{DM} p.168.  Now assume that $|\alpha|<|\beta|$.
Let $\A$ be a  given polyadic algebra of dimension $\alpha$; discard its cylindrifications and then take its minimal dilation $\B$, 
which exists by the above.
We need to define cylindrifications on the big algebra, so that they agree with their values in $\A$ and to have $\A\cong \Nr_{\alpha}\B$. We let (*):
$${\sf c}_k{\sf s}_{\sigma}^{\B}p={\sf s}_{\rho^{-1}}^{\B} {\sf c}_{\rho(\{k\}\cap \sigma \alpha)}{\sf s}_{(\rho\sigma\upharpoonright \alpha)}^{\A}p.$$
Here $\rho$ is a any permutation such that $\rho\circ \sigma(\alpha)\subseteq \sigma(\alpha.)$
Then we claim that the definition is sound, that is, it is independent of $\rho, \sigma, p$. 
Towards this end, let $q={\sf s}_{\sigma}^{\B}p={\sf s}_{\sigma_1}^{\B}p_1$ and 
$(\rho_1\circ \sigma_1)(\alpha)\subseteq \alpha.$

We need to show that (**)
$${\sf s}_{\rho^{-1}}^{\B}{\sf c}_{[\rho(\{k\}\cap \sigma(\alpha)]}^{\A}{\sf s}_{(\rho\circ \sigma\upharpoonright \alpha)}^{\A}p=
{\sf s}_{\rho_1{^{-1}}}^{\B}{\sf c}_{[\rho_1(\{k\}\cap \sigma(\alpha)]}^{\A}{\sf s}_{(\rho_1\circ \sigma\upharpoonright \alpha)}^{\A}p.$$
Let $\mu$ be a permutation of $\beta$ such that
$\mu(\sigma(\alpha)\cup \sigma_1(\alpha))\subseteq \alpha$.
Now applying ${\sf s}_{\mu}$ to the left hand side of (**), we get that 
$${\sf s}_{\mu}^{\B}{\sf s}_{\rho^{-1}}^{\B}{\sf c}_{[\rho(\{k\})\cap \sigma(\alpha)]}^{\A}{\sf s}_{(\rho\circ \sigma|\alpha)}^{\A}p
={\sf s}_{\mu\circ \rho^{-1}}^{\B}{\sf c}_{[\rho(\{k\})\cap \sigma(\alpha)]}^{\A}{\sf s}_{(\rho\circ \sigma|\alpha)}^{\A}p.$$
The latter is equal to
${\sf c}_{(\mu(\{k\})\cap \sigma(\alpha))}{\sf s}_{\sigma}^{\B}q.$
Now since $\mu(\sigma(\alpha)\cap \sigma_1(\alpha))\subseteq \alpha$, we have
${\sf s}_{\mu}^{\B}p={\sf s}_{(\mu\circ \sigma\upharpoonright \alpha)}^{\A}p={\sf s}_{(\mu\circ \sigma_1)\upharpoonright \alpha)}^{\A}p_1\in A$.
It thus follows that 
$${\sf s}_{\rho^{-1}}^{\B}{\sf c}_{[\rho(\{k\})\cap \sigma(\alpha)]}^{\A}{\sf s}_{(\rho\circ \sigma\upharpoonright \alpha)}^{\A}p=
{\sf c}_{[\mu(\{k\})\cap \mu\circ \sigma(\alpha)\cap \mu\circ \sigma_1(\alpha))}{\sf s}_{\sigma}^{\B}q.$$ 
By exactly the same method, it can be shown that 
$${\sf s}_{\rho_1{^{-1}}}^{\B}{\sf c}_{[\rho_1(\{k\})\cap \sigma(\alpha)]}^{\A}{\sf s}_{(\rho_1\circ \sigma\upharpoonright \alpha)}^{\A}p
={\sf c}_{[\mu(\{k\})\cap \mu\circ \sigma(\alpha)\cap \mu\circ \sigma_1(\alpha))}{\sf s}_{\sigma}^{\B}q.$$ 
By this we have proved (**).

Furthermore, it defines the required algebra $\B$. Let us check this.
Since our definition is slightly different than that in \cite{DM}, by restricting cylindrifications to be olny finite, 
we need to check the polyadic axioms which is tedious but routine. The idea is that every axiom can be pulled back to 
its corresponding axiom holding in the small algebra 
$\A$.
We check only the axiom $${\sf c}_k(q_1\land {\sf c}_kq_2)={\sf c}_kq_1\land {\sf c}_kq_2.$$
We follow closely \cite{DM} p. 166. 
Assume that $q_1={\sf s}_{\sigma}^{\B}p_1$ and $q_2={\sf s}_{\sigma}^{\B}p_2$. 
Let $\rho$ be a permutation of $I$ such that $\rho(\sigma_1I\cup \sigma_2I)\subseteq I$ and let 
$$p={\sf s}_{\rho}^{\B}[q_1\land {\sf c}_kq_2].$$
Then $$p={\sf s}_{\rho}^{\B}q_1\land {\sf s}_{\rho}^{\B}{\sf c}_kq_2
={\sf s}_{\rho}^{\B}{\sf s}_{\sigma_1}^{\B}p_1\land {\sf s}_{\rho}^{\B}{\sf c}_k {\sf s}_{\sigma_2}^{\B}p_2.$$
Now we calculate ${\sf c}_k{\sf s}_{\sigma_2}^{\B}p_2.$
We have by (*)
$${\sf c}_k{\sf s}_{\sigma_2}^{\B}p_2= {\sf s}^{\B}_{\sigma_2^{-1}}{\sf c}_{\rho(\{k\}\cap \sigma_2I)} {\sf s}^{\A}_{(\rho\sigma_2\upharpoonright I)}p_2.$$
Hence $$p={\sf s}_{\rho}^{\B}{\sf s}_{\sigma_1}^{\B}p_1\land {\sf s}_{\rho}^{\B}{\sf s}^{\B}_{\sigma^{-1}}{\sf c}_{\rho(\{k\}\cap \sigma_2I)} 
{\sf s}^{\A}_{(\rho\sigma_2\upharpoonright I)}p_2.$$
\begin{equation*}
\begin{split}
&={\sf s}^{\A}_{\rho\sigma_1\upharpoonright I}p_1\land {\sf s}_{\rho}^{\B}{\sf s}^{\A}_{\sigma^{-1}}{\sf c}_{\rho(\{k\}\cap \sigma_2I)} 
{\sf s}^{\A}_{(\rho\sigma_2\upharpoonright I)}p_2,\\
&={\sf s}^{\A}_{\rho\sigma_1\upharpoonright I}p_1\land {\sf s}_{\rho\sigma^{-1}}^{\A}
{\sf c}_{\rho(\{k\}\cap \sigma_2I)} {\sf s}^{\A}_{(\rho\sigma_2\upharpoonright I)}p_2,\\
&={\sf s}^{\A}_{\rho\sigma_1\upharpoonright I}p_1\land {\sf c}_{\rho(\{k\}\cap \sigma_2I)} {\sf s}^{\A}_{(\rho\sigma_2\upharpoonright I)}p_2.\\
\end{split}
\end{equation*} 
Now $${\sf c}_k{\sf s}_{\rho^{-1}}^{\B}p={\sf c}_k{\sf s}_{\rho^{-1}}^{\B}{\sf s}_{\rho}^{\B}(q_1\land {\sf c}_k q_2)={\sf c}_k(q_1\land {\sf c}_kq_2)$$
We next calculate ${\sf c}_k{\sf s}_{\rho^{-1}}p$.
Let $\mu$ be a permutation of $I$ such that $\mu\rho^{-1}I\subseteq I$. Let $j=\mu(\{k\}\cap \rho^{-1}I)$.
Then applying (*), we have:
\begin{equation*}
\begin{split}
&{\sf c}_k{\sf s}_{\rho^{-1}}p={\sf s}^{\B}_{\mu^{-1}}{\sf c}_{j}{\sf s}_{(\mu\rho^{-1}|I)}^{\A}p,\\
&={\sf s}^{\B}_{\mu^{-1}}{\sf c}_{j}{\sf s}_{(\mu\rho^{-1}|I)}^{\A}
{\sf s}^{\A}_{\rho\sigma_1\upharpoonright I}p_1\land {\sf c}_{(\rho\{k\}\cap \sigma_2I)} {\sf s}^{\B}_{(\rho\sigma_2\upharpoonright I)}p_2,\\
 &={\sf s}^{\B}_{\mu^{-1}}{\sf c}_{j}[{\sf s}_{\mu \sigma_1\upharpoonright I}p_1\land r].\\
\end{split}
\end{equation*}

where 
$$r={\sf s}_{\mu\rho^{-1}}^{\B}{\sf c}_j {\sf s}_{\rho \sigma_2\upharpoonright I}^{\A}p_2.$$
Now ${\sf c}_kr=r$. Hence, applying the axiom in the small algebra, we get: 
$${\sf s}^{\B}_{\mu^{-1}}{\sf c}_{j}[{\sf s}_{\mu \sigma_1\upharpoonright I}^{\A}p_1]\land {\sf c}_k q_2
={\sf s}^{\B}_{\mu^{-1}}{\sf c}_{j}[{\sf s}_{\mu \sigma_1\upharpoonright I}^{\A}p_1\land r].$$
But
$${\sf c}_{\mu(\{k\}\cap \rho^{-1}I)}{\sf s}_{(\mu\sigma_1|I)}^{\A}p_1=
{\sf c}_{\mu(\{k\}\cap \sigma_1I)}{\sf s}_{(\mu\sigma_1|I)}^{\A}p_1.$$
So 
$${\sf s}^{\B}_{\mu^{-1}}{\sf c}_{k}[{\sf s}_{\mu \sigma_1\upharpoonright I}^{\A}p_1]={\sf c}_kq_1,$$ and 
we are done. 


For the second part, let $\A\subseteq \Nr_{\alpha}\B$ and $A$ generates $\B$ then $\B$ consists of all elements ${\sf s}_{\sigma}^{\B}x$ such that 
$x\in A$ and $\sigma$ is a transformation on $\beta$ such that
$\sigma\upharpoonright \alpha$ is one to one \cite{DM} theorem 3.3 and 4.3.
Now suppose $x\in \Nr_{\alpha}\Sg^{\B}X$ and $\Delta x\subseteq
\alpha$. There exists $y\in \Sg^{\A}X$ and a transformation $\sigma$
of $\beta$ such that $\sigma\upharpoonright \alpha$ is one to one
and $x={\sf s}_{\sigma}^{\B}.$  
Let $\tau$ be a 
transformation of $\beta$ such that $\tau\upharpoonright  \alpha=Id
\text { and } (\tau\circ \sigma) \alpha\subseteq \alpha.$ Then
$x={\sf s}_{\tau}^{\B}x={\sf s}_{\tau}^{\B}{\sf s}_{\sigma}y=
{\sf s}_{\tau\circ \sigma}^{\B}y={\sf s}_{\tau\circ
\sigma\upharpoonright \alpha}^{\A'}y.$
\end{demo}
From now on cylindric polyadic algebras are denoted by $_m\CPEA$.

\begin{theorem} Let $\beta$ be a cardinal, and $\A=\Fr_{\beta}{}_m\CPEA_{\alpha}$ be the free algebra on $\beta$ generators.
Let $X_1, X_2\subseteq
\beta$, $a\in \Sg^{\A}X_1$ and $c\in \Sg^{\A}X_2$ be such that $a\leq c$.
Then there exists $b\in \Sg^{\A}(X_1\cap X_2)$ such that $a\leq b\leq c.$
\end{theorem}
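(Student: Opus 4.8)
The plan is to argue by contradiction, reducing interpolation to the existence of a pair of compatible perfect ultrafilters and then to a single relativized representation that separates $a$ from $c$. Assume no interpolant exists. First I would pass to a dilation: by Lemma \ref{net} neatly embed $\A$ into the full neat reduct of a cylindric polyadic algebra $\B$ of dimension $\beta=\alpha+\varepsilon$, where $\varepsilon>\max(\alpha,|A|)$ is chosen so that $\alpha+\varepsilon$ is regular, and such that $\Sg^\A X=\Nr_\alpha\Sg^\B X$ for every $X\subseteq A$; in particular $\Sg^\A X_i=\Nr_\alpha\Sg^\B X_i$ for $i=1,2$ and for $X_1\cap X_2$. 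The spare dimensions are what give room to run the Henkin construction of the Perfect Ultrafilter Lemma, and freeness of $\A$ will be used at the very end.

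Next I would produce the common trace on $\C=\Sg^\A(X_1\cap X_2)$ by a Boolean separation. Consider in $\C$ the filter $H=\{x\in \C: a\leq x\}$ and the ideal $K=\{x\in \C: x\leq c\}$ (both make sense since $a,c$ and all of $\C$ lie in $\A$). An element of $H\cap K$ would be an interpolant, so by assumption $H\cap K=\emptyset$; by the ultrafilter separation theorem for Boolean algebras there is an ultrafilter $G$ of $\C$ with $H\subseteq G$ and $G\cap K=\emptyset$. A routine finite-intersection-property check then shows that $G\cup\{a\}$ generates a proper filter in $\Sg^\A X_1$ (if $a\cdot g=0$ for some $g\in G$ then $-g\in H\subseteq G$, contradicting properness) and that $G\cup\{-c\}$ generates a proper filter in $\Sg^\A X_2$ (if $g\leq c$ then $g\in K$, contradicting $G\cap K=\emptyset$). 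Extending, I obtain ultrafilters $F_1\ni a$ on $\Sg^\A X_1$ and $F_2\ni -c$ on $\Sg^\A X_2$ with $F_1\cap \C=G=F_2\cap\C$, i.e. agreeing on the common subalgebra.

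The heart of the argument --- and the step I expect to be the main obstacle --- is to upgrade $F_1,F_2$ to perfect ultrafilters in the dilations $\Sg^\B X_1,\Sg^\B X_2$ while keeping them in agreement on $\Sg^\B(X_1\cap X_2)$. I would first run the recursive construction of the Perfect Ultrafilter Lemma on the common dilation to manufacture a perfect filter $\mathcal{D}_0$ extending $G$ and separating $H$ from $K$, then extend it, side by side, to perfect filters $\mathcal{D}_1\ni a$ and $\mathcal{D}_2\ni -c$. The delicate point is the choice of fresh witnesses: at each stage one must pick a dimension $m\notin\alpha$ with $m\notin\bigcup_{g\in G_{n-1}}\dim g$ and $\tau m=m$, and for defects $s_\tau c_j x$ with $x$ in the common subalgebra the same witness must be used on both sides, whereas for defects genuinely on the $X_1$- or $X_2$-side disjoint reservoirs of fresh dimensions are used; the regularity of $\alpha+\varepsilon$ and the bound $\varepsilon>\max(\alpha,|A|)$ guarantee that these reservoirs never run out and that properness is preserved exactly as in the Lemma (the $c_m^{\partial}$ computation there goes through verbatim). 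Passing to the minimal completion as in the Lemma, I extend to actual perfect ultrafilters $F_1^{+},F_2^{+}$ still agreeing on the common part.

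Finally I would invoke Ferenczi's representation theorem (the neat embedding theorem above), which via a perfect ultrafilter yields a relativized representation; so $F_1^{+}$ gives a representation $h_1$ of $\Sg^\A X_1\in {\bf I}_m\Gwp_\alpha$ with $h_1(a)\neq 0$, and $F_2^{+}$ gives $h_2$ of $\Sg^\A X_2$ with $h_2(-c)\neq 0$, and since the two ultrafilters agree on $\C$ these representations agree on $\C$. Amalgamating $h_1$ and $h_2$ over the common representation of $\C$ and using that $\A$ is free on $\beta\supseteq X_1\cup X_2$ to extend the glued interpretation of generators to a homomorphism, I obtain a relativized representation $h$ of $\A$ carrying a single point into both $h(a)$ and $h(-c)$, so that $h(a\cdot -c)\neq 0$. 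But $a\leq c$ gives $a\cdot -c=0$ and hence $h(a\cdot -c)=0$, a contradiction. Therefore an interpolant $b\in\Sg^\A(X_1\cap X_2)$ with $a\leq b\leq c$ must exist, which is the assertion of the theorem (and yields the super amalgamation property of the title).
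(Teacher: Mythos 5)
Your skeleton matches the paper's: argue by contradiction, dilate via the neat embedding lemma, build two perfect ultrafilters agreeing on the common subalgebra, then finish with the relativized representation and freeness. But there is a genuine gap at exactly the point you yourself flag as ``the main obstacle'', and your proposed resolution does not close it. The $c_m^{\partial}$ computation in the perfect ultrafilter lemma shows only that \emph{each} filter, taken separately, remains proper when a witness implication with a fresh witness is adjoined; it says nothing about the \emph{joint} consistency of the two side-by-side constructions. What actually has to be proved is the mixed statement: whenever $b_0,b_1\in \Sg^{\B}(X_1\cap X_2)$ satisfy
$$a\cdot \prod_{i<n}(-{\sf s}_{\tau_{i}}{\sf c}_{k_{i}}x_{i}+{\sf s}_{\tau_{i}}{\sf s}^{k_{i}}_{u_{i}}x_{i})\leq b_0 \quad\text{ and }\quad (-c)\cdot \prod_{j<m}(-{\sf s}_{\sigma_{j}}{\sf c}_{l_{j}}y_{j}+{\sf s}_{\sigma_{j}}{\sf s}^{l_{j}}_{v_{j}}y_{j})\leq b_1,$$
then $b_0\cdot b_1\neq 0$. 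Without this, the traces of your $\mathcal{D}_1$ and $\mathcal{D}_2$ on $\Sg^{\B}(X_1\cap X_2)$ need not extend to one common ultrafilter, and the final gluing of the two representations is impossible. This joint properness is the heart of the paper's proof: it is established by induction on $n+m$, eliminating the freshest witness by applying ${\sf c}_u$ to the inequality on the side containing that witness and the \emph{dual} ${\sf c}_u^{\partial}$ to the inequality on the other side, and it is in no sense a ``verbatim'' repetition of the single-filter properness computation.

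Two further problems. First, the base case of that induction is the non-existence of an interpolant in $\Sg^{\B}(X_1\cap X_2)$, not in $\Sg^{\A}(X_1\cap X_2)$; your separation of the filter $H$ from the ideal $K$ is carried out only at the $\A$ level, so you still need the paper's pull-back step: an interpolant in the dilation's common subalgebra can be pushed down, using finitely many cylindrifiers and $\Sg^{\A}Y=\Nr_{\alpha}\Sg^{\B}Y$, to an interpolant in $\Sg^{\A}(X_1\cap X_2)$. Second, your prescription to use the \emph{same} witness on both sides for defects coming from the common subalgebra is counterproductive: the elimination argument needs all witnesses $u_i$, $v_j$ to be pairwise distinct and fresh across the two sides (the paper interleaves their choice precisely to arrange this), because ${\sf c}^{\partial}_{u}$ must fix every implication on the opposite side, and this fails when some opposite-side implication has $u$ itself as witness. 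Relatedly, your preliminary ultrafilters $F_1,F_2$ on $\Sg^{\A}X_1,\Sg^{\A}X_2$ do no work: an ultrafilter is maximal and cannot afterwards be ``upgraded'' by adjoining witness implications; the paper instead places the witness implications in the generating sets $Y_1,Y_2$ from the outset and passes to ultrafilters only at the very end, via the minimal completion.
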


\begin{proof}  Let $a\in \Sg^{\A} X_1$ and $c\in \Sg^{\A}X_2$ be such that $a\leq c$. 
We want to find an interpolant in 
$\Sg^{\A}(X_1\cap X_2)$. Assume that $\mu$ is a regular cardinal $>max(|\alpha|,|A|)$.
Let $\B\in {}_m\CPEA_{\mu}$, such that $\A=\Nr_{\alpha}\B$, 
and $A$ generates $\B$.   
Let $H_{\mu}=\{\rho\in {}^{\mu}\mu: |\rho(\alpha)\cap (\mu\sim \alpha)|<\omega\}$.
Let $S$ be the semigroup generated by $H_{\mu}.$ 
Let $\B'\in {}_m\CPEA_{\mu}$ be an ordinary  dilation of $\A$ where all transformations in $^{\mu}\mu$ are used.  
(This can be easily defined like in the case of ordinary polyadic algebras). Then $\A=\Nr_{\alpha}\B'$. 
We take a suitable reduct of $\B'$. Let $\B$ be the subalgebra of 
$\B'$ generated from $A$ be all operations except for substitutions indexed by transformations not in $S$.
Then, of course $A\subseteq \B$; in fact, $\A=\Nr_{\alpha}\B$, since for each $\tau\in {}^{\alpha}\alpha$, $\tau\cup Id\in S.$ 
Then one can  show inductively that for $b\in B$, if 
$|\Delta b\sim \alpha|<\omega$, and $\rho\in S$, then $|\rho(\Delta b)\sim \alpha|<\omega$. 
Next one defines filters in the dilations $\Sg^{\B}X_1$ and in $\Sg^{\B}X_2$
like in Ferenczi \cite{Fer}, but they have to be compatible on the common subalgebras. This needs some work.
Assume that no interpolant exists in $\Sg^{\A}(X_1\cap X_2)$. 
Then no interpolant exists in $\Sg^{\B}(X_1\cap X_2)$, for if one does, 
then it can be pulled back, using the first part, by cylindrifiers only on fiinitely many indices, to an interpolant
in $\Sg^{\A}(X_1\cap X_2)$, which we assume does not exists. 
We eventually arrive at a contradiction.
Arrange $adm\times\mu \times \Sg^{\B}(X_1)$
and $adm\times\mu\times \Sg^{\B}(X_2)$
into $\omega$-termed sequences:
$$\langle (\tau_i,k_i,x_i): i\in \mu\rangle\text {  and  }\langle (\sigma_i,l_i,y_i):i\in \mu\rangle
\text {  respectively.}$$
is as desired. 
Thus we can define by recursion (or step-by-step)
$\mu$-termed sequences of witnesses:
$$\langle u_i:i\in \mu\rangle \text { and }\langle v_i:i\in \mu\rangle$$
such that for all $i\in \mu$ we have:
$$u_i\in \mu\smallsetminus
(\Delta a\cup \Delta c)\cup \cup_{j\leq i}(\Delta x_j\cup \Delta y_j\cup Do\tau_j\cup Rg\tau_j\cup Do\sigma_j\cup Rg\sigma_j)\cup \{u_j:j<i\}\cup \{v_j:j<i\}$$
and
$$v_i\in \mu\smallsetminus(\Delta a\cup \Delta c)\cup
\cup_{j\leq i}(\Delta x_j\cup \Delta y_j\cup Do\tau_j\cup Rg\tau_j\cup Do\sigma_j\cup Rg\sigma_j))\cup \{u_j:j\leq i\}\cup \{v_j:j<i\}.$$
For a cylindric algebra $\cal D$ we write $Bl\cal D$ to denote its boolean reduct.
For $i, j<\mu$, $i\neq j$,
$s_i^jx=c_i(d_{ij}\cdot x)$ and $s_i^ix=x$. $s_i^j$ is a unary operation
that abstracts the operation of substituting the
variable $v_i$ for the variable $v_j$ such that
the substitution is free.
For a boolean algebra $\cal C$  and $Y\subseteq \cal C$, we write
$fl^{\cal C}Y$ to denote the boolean filter generated by $Y$ in $\cal C.$  Now let
$$Y_1= \{a\}\cup \{-{\sf s}_{\tau_i}{\sf  c}_{k_i}x_i+{\sf s}_{\tau_i}{\sf s}_{u_i}^{k_i}x_i: i\in \mu\},$$
$$Y_2=\{-c\}\cup \{-{\sf s}_{\sigma_i}{\sf  c}_{l_i}y_i+{\sf s}_{\sigma_i}{\sf s}_{v_i}^{l_i}y_i:i\in \mu\},$$
$$H_1= fl^{Bl\Sg^{B}(X_1)}Y_1,\  H_2=fl^{Bl\Sg^B(X_2)}Y_2,$$ and
$$H=fl^{Bl\Sg^{B}(X_1\cap X_2)}[(H_1\cap \Sg^{B}(X_1\cap X_2)
\cup (H_2\cap \Sg^{B}(X_1\cap X_2)].$$
We claim that $H$ is a proper filter of $\Sg^{B}(X_1\cap X_2).$
To prove this it is sufficient to consider any pair of finite, strictly
increasing sequences of natural numbers
$$\eta(0)<\eta(1)\cdots <\eta(n-1)<\mu\text { and } \xi(0)<\xi(1)<\cdots
<\xi(m-1)<\mu,$$
and to prove that the following condition holds:

(1) For any $b_0$, $b_1\in \Sg^{B}(X_1\cap X_2)$ such that
$$a.\prod_{i<n}(-{\sf s}_{\tau_{\eta(i)}}{\sf  c}_{k_{\eta(i)}}x_{\eta(i)}+{\sf s}_{\tau_{\eta(i)}}{\sf s}_{u_{\eta(i)}}^{k_{\eta(i)}}x_{\eta(i)})\leq b_0$$
and
$$(-c).\prod_{i<m}
(-{\sf s}_{\sigma_{\xi(i)}}{\sf  c}_{l_{\xi(i)}}y_{\xi(i)}+{\sf s}_{\sigma_{\xi(i)}}{\sf s}_{v_{\xi(i)}}^{l_{\xi(i)}}y_{\xi(i)})\leq b_1$$
we have
$$b_0.b_1\neq 0.$$
We prove this by induction on $n+m$.
If $n+m=0$, then (1) simply
expresses the fact that no interpolant of $a$ and $c$ exists in
$\Sg^{\B}(X_1\cap X_2).$
In more detail: if $n+m=0$, then $a_0\leq b_0$
and $-c\leq b_1$. So if $b_0.b_1=0$, we get $a\leq b_0\leq -b_1\leq c.$
Now assume that $n+m>0$ and for the time being suppose that $\eta(n-1)>\xi(m-1)$.
Apply ${\sf  c}_{u_{\eta(n-1)}}$ to both sides of the first inclusion of (1).
By $u_{\eta(n-1)}\notin \Delta a$, i.e. ${\sf  c}_{u_{\eta(n-1)}}a=a$,
and by recalling that ${\sf  c}_i({\sf  c}_ix.y)={\sf  c}_ix.{\sf  c}_iy$, we get (2)
$$a.{\sf  c}_{u_{\eta(n-1)}}\prod_{i<n}(-{\sf s}_{\tau_{\eta(i)}}{\sf  c}_{k_{\eta(i)}}x_{\eta(i)}+{\sf s}_{\tau_{\eta(i)}}{\sf s}_{u_{\eta(i)}}^{k_{\eta(i)}}x_{\eta(i)})\leq {\sf  c}_{u_{\eta(n-1)}}b_0.$$
Let ${\sf  c}_i^{\partial}(x)=-{\sf  c}_i(-x)$.  ${\sf  c}_i^{\partial}$
is the algebraic counterpart of the universal quantifier $\forall x_i$.
Now apply ${\sf  c}_{u_{\eta(n-1)}}^{\partial}$ to the second inclusion of (1).
By noting that ${\sf  c}_i^{\partial}$, the dual of ${\sf  c}_i$,
distributes over the boolean meet and by
$u_{\eta(n-1)}\notin \Delta c=\Delta (-c)$ we get (3)
$$(-c).\prod_{j<m}{\sf  c}_{u_{\eta(n-1)}}^{\partial}(-{\sf s}_{\sigma_{\xi(i)}}{\sf  c}_{l_{\xi(i)}}y_{\xi(i)}+{\sf s}_{\sigma_{\xi(i)}}{\sf s}_{v_{\xi(i)}}^{l_{\xi(i)}}y_{\xi(i)})\leq {\sf  c}_{u_{\eta(n-1)}}^{\partial}b_1.$$
Before going on, we formulate (and prove) a claim that will enable us to eliminate
the quantifier ${\sf  c}_{u_{\eta(n-1)}}$ (and its dual) from (2) (and (3)) above.

For the sake of brevity set for each $i<n$ and each $j<m:$
$$z_i=-{\sf  c}_{k_{\eta(i)}}x_{\eta(i)}+{\sf s}_{u_{\eta(i)}}^{k_{\eta(i)}}x_{\eta(i)}$$  and
$$t_i=-{\sf  c}_{l_{\xi(i)}}y_{\xi(i)}+{\sf s}_{v_{\xi(i)}}^{l_{\xi(i)}}y_{\xi(i)}.$$
Then $(i)$ and $(ii)$ below hold:
$$(i)\  {\sf  c}_{u_{\eta(n-1)}}z_i=z_i\text { for }i<n-1 \text { and }{\sf  c}_{u_{\eta(n-1)}}z_{n-1}=1.$$
$$(ii)\ {\sf  c}_{u_{\eta(n-1)}}^{\partial}t_j=t_j\text { for all }j<m.$$

 {\it Proof of ${\sf  c}_{u_{\eta_{n-1}}}z_i=z_i$ for  $i<n-1$.}

Let $i<n-1$. Then by the choice of witnesses we have
$$u_{\eta(n-1)}\neq u_{\eta(i)}.$$
Also it is easy to see that for all $i,j\in \alpha$ we have
$$\Delta {\sf  c}_jx\subseteq \Delta x\text {  and that }
\Delta {\sf s}_j^ix\subseteq \Delta x\smallsetminus\{i\}\cup \{j\},$$
In particular,
$$u_{\eta(n-1)}\notin \Delta {\sf  c}_{k_{\eta(i)}}x_{\eta(i)}\text { and }
u_{\eta(n-1)}\notin \Delta ({\sf s}_{u_{\eta(i)}}^{k_{\eta(i)}}x_{\eta(i)}).$$
It thus follows that
$${\sf  c}_{u_{\eta(n-1)}}(-{\sf  c}_{k_{\eta(i)}}x_{\eta(i)})=-{\sf  c}_{k_{\eta(i)}}x_{\eta(i)}\text { and }
{\sf  c}_{u_{\eta(n-1)}} ({\sf s}_{u_{\eta(i)}}^{k_{\eta(i)}}x_{\eta(i)})={\sf s}_{u_{\eta(i)}}^{k_{\eta(i)}}
x_{\eta(i)}.$$
Finally, by ${\sf  c}_{u_{\eta(n-1)}}$ distributing over the boolean join, we get
$${\sf  c}_{u_{\eta(n-1)}} z_i=z_i \text { for }  i<n-1.$$

{\it Proof of ${\sf  c}_{u_{\eta(n-1)}}z_{n-1}=1.$}

Computing we get, by $u_{\eta(n-1)}\notin \Delta x_{\xi(n-1)}$
and by \cite[1.5.8(i), 1.5.8(ii)]{HMT1}
the following:
$${\sf  c}_{u_{\eta(n-1)}}(-{\sf  c}_{k_{\eta(n-1)}}x_{\eta(n-1)}
+ {\sf s}_{u_{\eta(n-1)}}^{k_{\eta(n-1)}}x_{\eta(n-1)})$$
$$ ={\sf  c}_{u_{\eta(n-1)}}-{\sf  c}_{k_{\eta(n-1)}}x_{\eta(n-1)}+ {\sf  c}_{u_{\eta(n-1)}}
{\sf s}_{u_{\eta(n-1)}}^{k_{\eta(n-1)}}x_{\eta(n-1)}$$

$$=-{\sf  c}_{k_{\eta(n-1)}}x_{\eta(n-1)}+ {\sf  c}_{u_{\eta(n-1)}}{\sf s}_{u_{\eta(n-1)}}^{k_{\eta(n-1)}}
x_{\eta(n-1)}$$
$$=-{\sf  c}_{k_{\eta(n-1)}}x_{\eta(n-1)}+ {\sf  c}_{u_{\eta(n-1)}}{\sf s}_{u_{\eta(n-1)}}^{k_{\eta(n-1)}}
{\sf  c}_{u_{\eta(n-1)}}x_{\eta(n-1)}$$
$$=-{\sf  c}_{k_{\eta(n-1)}}x_{\eta(n-1)}+ {\sf  c}_{k_{\eta(n-1)}}{\sf s}_{k_{\eta(n-1)}}^{u_{\eta(n-1)}}
{\sf  c}_{u_{\eta(n-1)}}x_{\eta(n-1)}$$
$$=-{\sf  c}_{k_{\eta(n-1)}}x_{\eta(n-1)}+ {\sf  c}_{k_{\eta(n-1)}}{\sf  c}_{u_{\eta(n-1)}}x_{\eta(n-1)}$$
$$= -{\sf  c}_{k_{\eta(n-1)}}x_{\eta(n-1)}+ {\sf  c}_{k_{\eta(n-1)}}x_{\eta(n-1)}=1.$$
With this the proof of (i) in our claim is complete.
Now we prove (ii).
Let $j<m$ . Then we have
$${\sf  c}_{u_{\eta(n-1)}}^{\partial}(-{\sf  c}_{l_{\xi(j)}}y_{\xi(j)})
=-{\sf  c}_{l_{\xi(j)}}y_{\xi(j)}$$ and
$${\sf  c}_{u_{\eta(n-1)}}^{\partial}
({\sf s}_{v_{\xi(j)}}^{l_{\xi(j)}}y_{\xi(j)})={\sf s}_{v_{\xi(j)}}^{l_{\xi(j)}}y_{\xi(j)}.$$
Indeed,  computing we get
$${\sf  c}_{u_{\eta(n-1)}}^{\partial}(-{\sf  c}_{l_{\xi(j)}}y_{\xi(j)})
=-{\sf  c}_{u_{\eta_{n-1}}}-(-{\sf  c}_{l_{\xi(j)}}y_{\xi(j)})
= -{\sf  c}_{u_{\eta(n-1)}}{\sf  c}_{l_{\xi(j)}}y_{\xi(j)}
=-{\sf  c}_{l_{\xi(j)}}y_{\xi(j)}.$$
Similarly,  we have
$${\sf  c}_{u_{\eta(n-1)}}^{\partial} ({\sf s}_{v_{\xi(j)}}^{l_{\xi(j)}}y_{\xi(j)})
=-{\sf  c}_{u_{\eta(n-1)}}- ({\sf s}_{v_{\xi(j)}}^{l_{\xi(j)}}y_{\xi(j)})$$
$$=-{\sf  c}_{u_{\eta(n-1)}} ({\sf s}_{v_{\xi(j)}}^{l_{\xi(j)}}-y_{\xi(j)})
=- {\sf s}_{v_{\xi(j)}}^{l_{\xi(j)}}-y_{\xi(j)}
= {\sf s}_{v_{\xi(j)}}^{l_{\xi(i)}}y_{\xi(j)}.$$
By ${\sf  c}_i^{\partial}({\sf  c}_i^{\partial}x+y)= {\sf  c}_i^{\partial}x+{\sf  c}_i^{\partial}y$
we get from the above that
$${\sf  c}_{u_{\eta(n-1)}}^{\partial}(t_j)={\sf  c}_{u_{\eta(n-1)}}^{\partial}({\sf  c}_{l_{\xi(j)}}y_{\xi(j)}+{\sf s}_{v_{\xi(j)}}^{l_{\xi(j)}}y_{\xi(j)})$$
$$={\sf  c}_{u_{\eta(n-1)}}^{\partial}{\sf  c}_{l_{\xi(j)}}y_{\xi(j)}+{\sf  c}_{u_{\eta(n-1)}}^{\partial}
{\sf s}_{v_{\xi(j)}}^{l_{\xi(j)}}y_{\xi(j)}$$
$$={\sf  c}_{l_{\xi(j)}}y_{\xi(j)}+ {\sf s}_{v_{\xi(j)}}^{l_{\xi(j)}}y_{\xi(j)}=t_j.$$

By the above proven claim we have
$${\sf  c}_{u_{\eta(n-1)}}\prod_{i<n}z_i={\sf  c}_{u_{\eta(n-1)}}[\prod_{i<n-1}z_i.z_n]$$
$$={\sf  c}_{u_{\eta(n-1)}}\prod_{i<n-1}z_i.
{\sf  c}_{u_{\eta(n-1)}}z_{n-1}=\prod_{i<n-1}z_i.$$
Here we are using that ${\sf  c}_i({\sf  c}_ix.y)={\sf  c}_ix.{\sf  c}_iy$.
Combined with (2) we obtain
$$a. \prod_{i<n-1}(-{\sf  c}_{k_{\eta(i)}}x_{\eta(i)}+{\sf s}_{u_{\eta(i)}}^{k_{\eta(i)}}x_{\eta(i)})
 \leq {\sf  c}_{u_{\eta(n-1)}}b_0.$$
On the other hand, from our claim and (3),
 it follows that
$$(-c).\prod_{j<m}
(-{\sf  c}_{l_{\xi(j)}}y_{\xi(j)}+{\sf s}_{v_{\xi(j)}}^{l_{\xi(j)}}y_{\xi(j)})\leq {\sf  c}_{u_{\eta(n-1)}}^{\partial}b_1.$$
Now making use of the induction hypothesis we get
$${\sf  c}_{u_{\eta(n-1)}}b_0.{\sf  c}_{u_{\eta(n-1)}}^{\partial}b_1\neq 0;$$
and hence that
$$b_0.{\sf  c}_{u_{\eta(n-1)}}^{\partial}b_1\neq 0.$$
From
$$b_0.{\sf  c}_{u_{\eta(n-1)}}^{\partial}b_1\leq b_0.b_1$$
we reach the desired conclusion, i.e. that
$$b_0.b_1\neq 0.$$
The other case, when $\eta(n-1)\leq \xi(m-1)$ can be treated analgously
and is therefore left to the reader.
We have proved that $H$ is a proper filter.

Proving that $H$ is a proper filter of $\Sg^{\cal B}(X_1\cap X_2)$,
let $H^*$ be a (proper boolean) ultrafilter of $\Sg^{\cal B}(X_1\cap X_2)$
containing $H.$
We obtain  ultrafilters $F_1$ and $F_2$ of $\Sg^{\cal B}(X_1)$ and $\Sg^{\cal B}(X_2)$,
respectively, such that
$$H^*\subseteq F_1,\ \  H^*\subseteq F_2$$
and (**)
$$F_1\cap \Sg^{\cal B}(X_1\cap X_2)= H^*= F_2\cap \Sg^{\cal B}(X_1\cap X_2).$$
Now for all $x\in \Sg^{\cal B}(X_1\cap X_2)$ we have
$$x\in F_1\text { if and only if } x\in F_2.$$
Also from how we defined our ultrafilters, $F_i$ for $i\in \{1,2\}$ satisfy the following
condition:

(*) For all $k<\mu$, for all $x\in Sg^{\cal B}X_i$
if ${\sf  c}_kx\in F_i$ then ${\sf s}_l^kx$ is in $F_i$ for some $l\notin \Delta x.$
We obtain  ultrafilters $F_1$ and $F_2$ of $\Sg^{\B}X_1$ and $\Sg^{\B}X_2$, 
respectively, such that 
$$H^*\subseteq F_1,\ \  H^*\subseteq F_2$$
and (**)
$$F_1\cap \Sg^{\B}(X_1\cap X_2)= H^*= F_2\cap \Sg^{\B}(X_1\cap X_2).$$
Now for all $x\in \Sg^{\cal B}(X_1\cap X_2)$ we have 
$$x\in F_1\text { if and only if } x\in F_2.$$ 
Also from how we defined our ultrafilters, $F_i$ for $i\in \{1,2\}$ are perfect.  

Then define the homomorphisms, one on each subalgebra, like in \cite{t} p. 128-129, using the perfect ultrafilters, 
then freeness will enable pase these homomophisms, to a single one defined to the set of free generators, 
which we can assume to be, without any loss, to 
be $X_1\cap X_2$ and it will satisfy  $h(a.-c)\neq 0$ which is a contradiction.

\end{proof}

\section{ Second proof}

A technique used in constructing cylindric algebras with certain desirable properties
is to construct  atom structure with certain first order corresponding properties 
(like Monk's algebras, Maddux's algebras, and the Hirsch-Hodkinson numerous 
constructions. Sayed Ahmed, too, has implemented such constructions in a few publications of his.). 

Algebraic logicians and Modal logicians, frequently talk about the same thing with different names 
which has caused a communication problem in the past.

This was partially surpassed  by the pioneering work of Venema on cylindric modal logic, later enhanced by the 
Van Benthem-Andr\'eka-N\'emeti-Goldblatt (and their students, to name a few, Marx, Mikulas, and Kurusz) 
colloboration. 

We know how to build atom structures,  or indeed frames, from atomic algebras, 
and conversely subalgebras of complex algebras from frames. 
We are happy when we are able to preserve crucial logical properties.

But something seems to be missing. Modal logicians rarely study frames in isolation, 
rather they are interested in constructing new frames from old ones using bounded morphisms, generated subframes, 
disjoint unions, zig-zag products (the latter is a less familiar notion). 

An algebraic logician adopts an analogous perspective but on 
different (algebraic) level, via such constructions as
homomorphisms, subalgebras and direct products. So it appears that modal logicians work in a universe 
that is distant from that of the algebraic logicians. In this connection, 
it is absolutely natural to ask whether these universes are perhaps systematically related. And the answer is: 
they are, very much so, and {\it duality theory} is devoted to studying such links.
For example a representation theorem for algebras is the dual of representing abstract state frames by what 
Venema calls assignment frames.

Here we give an application of this duality, worked out by Marx, to show that cylindric-polyadic algebras have the $SUPAP$.
This proof also works 
for all varieties of relativized cylindric polyadic algebras studied by Ferenzci and reported in \cite{Fer}. 

The result follows from the simple observation that such varieties can be axiomatized with positive, hence Sahlqvist equations, 
and therefore they are canonical; and also we do not have a Rosser condition on cylindrifiers; cylindrifiers do not commute, 
this allows that the first order correspondants of such equations are {\it clausifiable}, see \cite{Marx} for the 
definition of this. 

This proof is inspired by the modal perspective of cylindric-like algebras, applied to cylindric-polyadic algebras 
that suggests a whole landscape below standard predicate logic, 
with a minimal modal logic at the base ascending to standard semantics via frame constraints. In particular, 
this landscape contains nice sublogics of the full predicate logic, sharing its desirable meta properties and at the same time avoiding its 
negative accidents due to its Tarskian 'square frames' modelling. In such nice numerous sublogics cuantifiers do not commute, 
and the merry go round identities hold.
Such mutant logics are currently a very rich area of research.

The technique used here can be traced back to N\'emeti, when he proved that relativized cylindric set algebras have $SUPAP$; 
using (classical) duality
between atom structures and cylindric algebras. Marx 'modalized' the proof, and slightly strenghtened N\'emeti results, using instead 
the well-established duality between modal frames and 
complex algebras. 

While N\'emeti talks about subalgebras of {\it finite direct products} of frames, 
Marx talks about {\it finite zig-zag products} of frames, and this is a non-trivail very useful generalization for proving
strong amalgamation for a lot of relativized set algebras whose units satisfy 
certain closure properties. 

On the other hand, such strong amalgmation results cannot be obtained from 
N\'emeti's technique which seems to work only
for {\it very relativized} set algebras, namely, the class ${\bf Crs_{\alpha}}$, for any $\alpha$.
This class is referred to as the class of cylindric relativized set algebras in \cite{HMT2}.  
As a matter of fact, thislatter class is just the first step along a radical path, obtained by deconstructing, so to speak,
the semantics of first order
logic , designing lighter modal versions of this system by locating implicit choice points in
this step up. This gives a whole landscape of decidable version with different computational 
constraints over universes of states (assignments) related by variable updates. An off shoot of such a arelativization
technique is also
regaining definabity (interpolation) and for tha matter an easior match betwen syntax and semantics.

We consider the non-commutive cylindric polyadic algebras introduced by Ferenzci; we consider, this time, the  case $\CPEA$.
There is no deep motivation for such a choice, except that varying the studies algebras, suggests tht our proofs work for all.a 
A frame is a first order structure $\F=(F,  T_{i}, S_{\tau})_{i\in \alpha, \tau\in {}^{\alpha}\alpha}$ where $T$ is an arbitrary set and
and  both $T_{i}$ and $S_{\tau}$  are binary relations on $T$  for all $i\in \alpha$; and $\tau\in {}^{\alpha}\alpha$.

Given a frame $\F$, its complex algebra will be denotet by $\F^+$; $\F^+$ is the algebra $(\wp(\F), c_i, s_{\tau})_{i\in \alpha, \tau\in {}^{\alpha}\alpha}$  
where for $X\subseteq  V$,
$c_i(X)=\{s\in V: \exists t\in X, (t, s)\in T_i \}$, and similarly for $s_{\tau}$.

For $K\subseteq \CPEA_{\alpha}$, we let $\Str K=\{\F: \F^+\in K\}.$ 

For a variety $V$, it is always the case that 
$\Str V\subseteq \At V$ and equality holds if the variety is atom-canonical. 
If $V$ is canonical, then $\Str V$ generates $V$ in the strong sense, that is 
$V= {\bf S}\Cm \Str V$. For Sahlqvist varieties, that are completely additive, as is our case, $\Str V$ is elementary.

\begin{definition}
\item Given a family $(\F_i)_{i\in I}$ of frames, a {\it zigzag  product} of these frames is a substructure of $\prod_{i\in I}\F_i$ such that the
projection maps restricted to $S$ are
onto.
\end{definition}

\begin{definition} Let $\F, \G, \H$ be frames, and $f:\G\to \F$ and $h:\F\to \H$. 
Then $INSEP=\{(x,y)\in \G\times \H: f(x)=h(y)\}$. 
\end{definition}

\begin{lemma} The frame $INSEP \upharpoonright G\times H$ is a zigzag product
of $G$ and $H$, such that $\pi\circ \pi_0=h\circ \pi_1$, where $\pi_0$ and $\pi_1$ are the projection maps.
\end{lemma}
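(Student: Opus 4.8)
The plan is to verify the two defining conditions of a zigzag product for the structure $INSEP\upharpoonright (G\times H)$, namely that it is a substructure of the product $\G\times\H$ and that the projection maps restricted to the relevant relations are surjective, and then to check the commutativity identity $\pi\circ\pi_0=h\circ\pi_1$ relating the maps. The key observation throughout is that $INSEP=\{(x,y):f(x)=h(y)\}$ is precisely the pullback (fibre product) of $\G$ and $\H$ over $\F$ along $f$ and $h$, so the two projections $\pi_0:INSEP\to G$ and $\pi_1:INSEP\to H$ satisfy $f\circ\pi_0=h\circ\pi_1$ by the very definition of the set.

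First I would argue that $INSEP$ is a genuine substructure of $\G\times\H$: a pair $(x,y)$ lies in $INSEP$ exactly when $f(x)=h(y)$, and since $f$ and $h$ are frame maps (bounded morphisms), whenever $(x,y)$ and $(x',y')$ are related by a product relation $T_i$ or $S_\tau$ (componentwise), the images satisfy $f(x)\,T_i\,f(x')$ and $h(y)\,T_i\,h(y')$ with $f(x)=h(y)$, $f(x')=h(y')$, so the fibre condition is closed under the relations and $INSEP$ is closed as a subframe. Second, I would establish the surjectivity (the zigzag condition) of $\pi_0$ and $\pi_1$ on the relation $S$: this is exactly where the hypotheses on $f$ and $h$ must be used, and I expect we need $f$ and $h$ to be surjective bounded morphisms so that every point of $\G$ (respectively $\H$) can be matched with a point in the other factor having the same image in $\F$, guaranteeing that the projections hit every $S$-related pair.

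The main obstacle I anticipate is precisely this surjectivity of the projections restricted to $S$. Closure under the relations is essentially formal once one unwinds the pullback definition, but the zigzag (onto) condition genuinely requires the back-and-forth property of bounded morphisms: given an $S$-transition in $\G$ starting at $x$, one must produce a matching transition in $INSEP$, which forces us to lift transitions through $f$ and then match them across $h$. This is where the interplay between the two maps becomes delicate, and where one must invoke that $h$ (or $f$) satisfies the forth condition so that the required witness in the other component exists.

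Finally, I would verify the stated identity $\pi\circ\pi_0=h\circ\pi_1$; here $\pi$ should be read as the map $f$ (or the induced map into $\F$), and the equation is immediate from the definition of $INSEP$, since for any $(x,y)\in INSEP$ we have $f(\pi_0(x,y))=f(x)=h(y)=h(\pi_1(x,y))$. Thus the commutativity is the defining equation of the fibre product and requires no further work beyond substituting the projections. The upshot is that $INSEP\upharpoonright(G\times H)$ realizes the categorical pullback of $\G$ and $\H$ over $\F$, which is exactly a zigzag product with the desired commuting square, and this is the construction Marx uses to transfer amalgamation from frames to the complex algebras.
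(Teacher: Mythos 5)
Your position here is unusual: the paper's entire ``proof'' of this lemma is a citation of Marx's dissertation (Lemma 5.2.4), so there is no in-paper argument to compare against, and your pullback argument is in substance exactly the content being invoked. You also read the garbled statement correctly: both $f$ and $h$ must map \emph{into} $\F$ (the paper's ``$h:\F\to\H$'' cannot be literal, since $h(y)$ is applied to $y\in H$), $INSEP$ is then the fibre product of $\G$ and $\H$ over $\F$, the symbol $\pi$ in the displayed identity is $f$, and $f\circ\pi_0=h\circ\pi_1$ holds by the mere definition of $INSEP$, as you say. You are also right that the surjectivity of the projections is the only real content, and that it needs a hypothesis absent from the lemma as stated: $x\in G$ lifts to a point of $INSEP$ exactly when $f(x)$ lies in the range of $h$, so one needs the ranges of $f$ and $h$ to coincide (for instance both maps onto). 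This is harmless in the paper's application, where $f$ and $h$ are the duals $f_+$, $h_+$ of monomorphisms and hence automatically surjective, but it deserves to be said explicitly, as you do.

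Two corrections, neither fatal. First, your opening step --- that $INSEP$ is ``closed under the relations'' --- is both unnecessary and unsound as argued: if $(x,y)\in INSEP$ and $(x',y')$ is a componentwise successor of $(x,y)$, then $f(x')$ and $h(y')$ are merely two successors of the common point $f(x)=h(y)$ and need not be equal, so $INSEP$ is \emph{not} in general closed under the product relations. Fortunately no closure is required: frames are purely relational structures, so any subset with the induced relations is a substructure, and the paper's definition of zigzag product asks only that the projections restricted to the subset be onto. Second, in your lifting discussion (which is what one needs if, as in Marx's actual definition, the projections are moreover required to be bounded morphisms), the witness in the $\H$-component comes from the \emph{back} condition of $h$, not its forth condition: from $x\mathrel{T_i}x'$ in $\G$, the forth condition of $f$ gives $h(y)=f(x)\mathrel{T_i}f(x')$ in $\F$, and then the back condition of $h$ supplies $y'$ with $y\mathrel{T_i}y'$ and $h(y')=f(x')$, putting $(x',y')$ in $INSEP$. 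With those repairs your outline is a complete and correct proof of the lemma the paper leaves to Marx.
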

\begin{proof} \cite{Marx} 5.2.4
\end{proof}
For an algebra $\A$, $\A_+$ denotes its ultrafilter atom structure. 
For $h:\A\to \B$, $h_+$ denotes the function from $\B_+\to \A_+$ defined by $h_+(u)=h^{-1}[u]$ 
where the latter is $\{x\in a: h(x)\in u\}.$

\begin{theorem}(\cite{Marx} lemma 5.2.6)
Assume that $K$ is a canonical variety and $\Str K$ is closed under finite zigzag products. Then $K$ has the superamalgamation
property.
\end{theorem}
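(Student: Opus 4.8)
The plan is to run the duality argument of N\'emeti and Marx: transport the amalgamation data to the frame side via the contravariant functor $(-)_+$, solve the problem there with a single zigzag product, and pull the solution back with $(-)^+$. Recall that the superamalgamation property ($SUPAP$) asks, given $\A_0,\A_1,\A_2\in K$ and embeddings $e_1\colon\A_0\to\A_1$, $e_2\colon\A_0\to\A_2$, for a $\D\in K$ together with embeddings $m_1\colon\A_1\to\D$, $m_2\colon\A_2\to\D$ satisfying $m_1\circ e_1=m_2\circ e_2$ and, in addition, the order-interpolation clause: whenever $m_1(x)\le m_2(y)$ there is $z\in\A_0$ with $x\le e_1(z)$ and $e_2(z)\le y$.

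First I would dualize. Applying $(-)_+$ to $e_1,e_2$ yields bounded morphisms $(e_1)_+\colon(\A_1)_+\to(\A_0)_+$ and $(e_2)_+\colon(\A_2)_+\to(\A_0)_+$; since $e_1,e_2$ are injective their duals are onto. By canonicity of $K$ each $(\A_i)_+$ lies in $\Str K$. Now I would amalgamate at the level of frames: applying the preceding lemma to $(e_1)_+$ and $(e_2)_+$ (the fibered product over the common quotient $(\A_0)_+$) gives $\P=INSEP\upharpoonright((\A_1)_+\times(\A_2)_+)$, a zigzag product of $(\A_1)_+$ and $(\A_2)_+$ whose projections $\pi_1,\pi_2$ are onto and satisfy $(e_1)_+\circ\pi_1=(e_2)_+\circ\pi_2$. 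Because $\Str K$ is closed under finite zigzag products, $\P\in\Str K$, so $\D:=\P^+\in K$.

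Next I would dualize back. Surjective bounded morphisms dualize to embeddings, so $\pi_1^+,\pi_2^+$ embed $((\A_1)_+)^+$ and $((\A_2)_+)^+$ into $\P^+=\D$. Precomposing with the canonical embeddings $\iota_i\colon\A_i\hookrightarrow((\A_i)_+)^+$ (which exist, and land in $K$, precisely because $K$ is canonical) yields embeddings $m_i:=\pi_i^+\circ\iota_i\colon\A_i\to\D$. Commutativity $m_1\circ e_1=m_2\circ e_2$ follows from $(e_1)_+\circ\pi_1=(e_2)_+\circ\pi_2$ together with the naturality of $\iota$ (after which both sides equal $((e_1)_+\circ\pi_1)^+\circ\iota_{\A_0}$); at this point $\D$ is already an amalgam.

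The remaining task is the super clause, and I expect it to be the only genuinely delicate step. Suppose $m_1(x)\le m_2(y)$. Since on complex algebras $\pi_i^+$ is essentially preimage along $\pi_i$, this unwinds in $\wp(\P)$ to $\pi_1^{-1}[\widehat{x}]\subseteq\pi_2^{-1}[\widehat{y}]$, where $\widehat{\,\cdot\,}$ denotes the set of ultrafilters witnessing the respective element. I would then exploit that $\P$ is fibered over $(\A_0)_+$: the relation $(e_1)_+\circ\pi_1=(e_2)_+\circ\pi_2$ together with surjectivity of both projections forces the separating set to factor through $(\A_0)_+$, so that a single ultrafilter-definable set there separates the images, and by the duality this set is named by an element $z\in\A_0$ with $x\le e_1(z)$ and $e_2(z)\le y$. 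This is exactly where the zigzag (subdirect, fibered) product is indispensable: a plain direct product would give amalgamation but not the interpolant. Carrying out this fiberwise ultrafilter argument cleanly is the crux; everything else is a formal consequence of functoriality of the duality and of closure of $\Str K$ under finite zigzag products.
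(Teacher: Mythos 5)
Your proposal is correct and follows essentially the same route as the paper's own (very brief) sketch, which likewise dualizes the two monomorphisms to surjective bounded morphisms, forms the fibered frame $INSEP$ over the ultrafilter frame of the base algebra via the preceding lemma, invokes closure of $\Str K$ under finite zigzag products, and takes the complex algebra of the resulting zigzag product as the superamalgam. Your write-up is in fact more detailed than the paper's sketch (which simply asserts the super clause and defers to Marx's Lemma 5.2.6), and you correctly take the zigzag product of the frames of the \emph{two} algebras being amalgamated, where the paper's sketch contains a slip in the indices.
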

\begin{demo}{Sketch of proof} Let $\A, \B, \C\in K$ and $f:\A\to \B$ and $h:\A\to \C$ be given monomorphisms. 
Then $f_+:\B_+\to \A_+$ and $h_+:\C_+\to \A_+$. We have $INSEP=\{(x,y): f_+(x)=h_+(y)\}$ is a zigzag connection. Let $\F$ be the zigzag product 
of $INSEP\upharpoonright \A_+\times \B_+$. 
Then $\F^+$ is a superamalgam.
\end{demo}

\begin{theorem}
The variety $\CPEA_{\alpha}$ has $SUPAP$.
\end{theorem}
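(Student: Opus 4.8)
The plan is to apply Theorem preceding this statement (the duality result of Marx) directly to the variety $K = \CPEA_{\alpha}$. By that theorem, it suffices to verify two hypotheses: first, that $\CPEA_{\alpha}$ is a canonical variety, and second, that $\Str \CPEA_{\alpha}$ is closed under finite zigzag products. Once both are in hand, $SUPAP$ follows immediately, so the entire burden of the proof is to establish these two closure/structural facts for the specific signature of cylindric-polyadic algebras.

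First I would address canonicity. The key observation, already flagged in the surrounding discussion, is that $\CPEA_{\alpha}$ can be axiomatized by equations that are \emph{positive}, and hence Sahlqvist. Because we have dropped the Rosser-style commutativity condition on cylindrifiers (cylindrifiers need not commute with one another), the problematic axioms that would otherwise force non-Sahlqvist shape are absent; all the remaining axioms — the Boolean axioms, the normality and additivity of each $\cyl{i}$ and each $\sub{}{}$-style substitution operator $s_{\tau}$, the diagonal axioms, and the substitution/cylindrification interaction axioms — are of Sahlqvist form. By the standard Sahlqvist canonicity theorem, a variety axiomatized by Sahlqvist equations is canonical, i.e. closed under canonical (perfect) extensions. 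This gives $\CPEA_{\alpha} = \mathbf{S}\Cm\Str\CPEA_{\alpha}$ in the strong sense recorded just before the statement.

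Next I would verify that $\Str \CPEA_{\alpha}$ is closed under finite zigzag products. Here I would lean on the fact that the first-order correspondents of the Sahlqvist axioms are, in Marx's terminology, \emph{clausifiable}; the remark before the statement explicitly notes that the failure of commutativity of cylindrifiers is exactly what makes the frame conditions clausifiable. A clausifiable (in particular, a positive-existential or suitably restricted universal-Horn-like) frame condition is preserved under taking substructures in which the projections are onto on the relevant relations, which is precisely the definition of a zigzag product given above. Concretely, for each defining frame condition on the binary relations $T_i$ and $S_{\tau}$, I would check that if the coordinate frames $\F_i$ all satisfy it, then any zigzag product inherits it, using the onto-ness of the projection maps restricted to $S$ to pull witnesses back from the factors. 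This is the step I expect to be the main obstacle: one must confirm, axiom by axiom, that the frame correspondents really are clausifiable and that no axiom hides an implicit commutativity requirement that would break zigzag closure. Given the explicit non-commutativity, I expect this to go through, but it is where the real work lies.

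Finally, with canonicity and zigzag closure of $\Str\CPEA_{\alpha}$ established, I would invoke the cited duality theorem to conclude that $\CPEA_{\alpha}$ has the superamalgamation property, completing the proof. I would also remark, as the surrounding text suggests, that the very same argument applies verbatim to each of Ferenczi's relativized cylindric-polyadic varieties, since they share the two decisive features — positive (Sahlqvist) axiomatizability and absence of a Rosser commutativity condition — that drive canonicity and clausifiability.
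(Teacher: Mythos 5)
Your proposal follows exactly the same route as the paper's own proof: canonicity of $\CPEA_{\alpha}$ via its positive (Sahlqvist) axiomatization, closure of $\Str\CPEA_{\alpha}$ under finite zigzag products via clausifiability of the Horn frame correspondents (enabled by the absence of commutativity of cylindrifiers), and then Marx's duality theorem to conclude $SUPAP$. The only difference is that you flag the axiom-by-axiom verification of clausifiability as the remaining work, whereas the paper disposes of it by citing Marx's theorem 5.3.5 directly.
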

\begin{proof} $\CPEA_{\alpha}$ can be easily  defined by positive equations then it is canonical.
The first order correspondents of the positive equations translated to the class of frames will be Horn formulas, hence clausifiable \cite{Marx} theorem 
5.3.5, and so $\Str K$ is closed under finite zigzag products. 
Marx's theorem finishes the proof.
\end{proof}

If one views relativized models as the natural semantics for predicate logic rather than some tinkering 
devise which is the approach adopted in \cite{HMT1}, then 
many well -established taboos of the field must be challenged. 

In standard textbooks one learns that predicate logical validity is one unique notion specified once and for all by the usual 
Tarskian (square) semantics and canonized by G\"odel's completeness theorem. Moreover, it is essentially complex, 
being undecidable by Church's theorem.

On the present view, however standard predicate logic has arisen historically by making several ad-hoc semantic decisions 
that could have gone differently. Its not all about 'one completeness theorem' but rather about several completeness theorems
obtained by varying both the semantic and syntactical parameters. 
This can be implemented from a classical relativized representability theory, 
like that adopted in the monograph \cite{HMT1}, though such algebras were treated in op.cit off main stream, 
and they were only brought back to the front of the scence by the work of Resek, Thompson, 
Andr\'eka and last but not least Ferenczi,  or from a modal perspective, that has enriched the subject 
considerably.

But on the other  hand, careful scrutiny of the situation reveals that things are not so clear cut, and the borderlines are hazy. 
Within the polyadic cylindric dichotomy there is the square relativisation dichotomy, and also 
vice versa.




\end{document}